\theoremstyle{remark}
\newtheorem{theorem}{Theorem}[section]
\newtheorem{definition}[theorem]{Definition}
\newtheorem{proposition}[theorem]{Proposition}
\newtheorem{corollary}[theorem]{Corollary}
\newtheorem{remark}[theorem]{Remark}
\newcommand{\diff}{ {\rm{d}} }
\begin{document}

\title[A ``Mean Value'' Approach to Asymptotic Behavior]{Exponential Decay Results for Semilinear Parabolic PDE with $C^0$ Potentials: A ``Mean Value'' Approach}

\author{Joseph L. Shomberg}
\address{Providence College \\ Department of Mathematics and Computer Science \\ Providence, RI 02918 \\ USA}
\email{jshomber@providence.edu}

\subjclass[2010]{35K58, 35B45, 35B40}

\keywords{Semilinear parabolic PDE, {\em{a priori}} estimates, asymptotic behavior, exponential decay, finite time blow-up}

\date{}

\begin{abstract}
The asymptotic behavior of some semilinear parabolic PDEs is analyzed by means of a ``mean value'' property. 
This property allows us to determine, by means of appropriate {\em{a priori}} estimates, some exponential decay results for suitable global solutions. 
We also apply the method to investigate a well-known finite time blow-up result.
An application is given to a one-dimensional semilinear parabolic PDE with boundary degeneracy. 
Our results shed further light onto the problem of determining initial data for which the corresponding solution is guaranteed to exponentially decay to zero or blow-up in finite time.
\end{abstract}

\maketitle

\section{Introduction}

Let $\Omega$ be a bounded domain (open and connected) subset of $\mathbb{R}^N$, for some positive integer $N$. Assume the boundary $\Gamma:=\partial\Omega$ is sufficiently smooth.
For $x\in\Omega$ and $t>0$, we study the asymptotic behavior of solutions $u=u(x,t)$ to semilinear parabolic equations for the form,
\begin{equation}
\label{eqn}
\frac{\partial u}{\partial t} - \nu \Delta u + f(u)u = 0,
\end{equation}
$\nu>0$, with the Dirichlet boundary conditions,
\begin{equation}
\label{bndry}
u_{\mid\Gamma}(x,\cdot)=0,
\end{equation}
and given the initial state,
\begin{equation}
\label{ic}
u(\cdot,0) = u_0(\cdot).
\end{equation}
(Of course the results could be suitably adjusted to incorporate other boundary conditions such as Neumann, mixed Neumann/Dirichlet, periodic, or Robin.) We only assume $f$ is a $C^0$ function on $\mathbb{R}$. Note that Hadamard well-posedness for problem (\ref{eqn})-(\ref{ic}) is not known because such with such minimal assumptions on $f$, uniqueness of solutions is not guaranteed. Typically for equations such as (\ref{eqn}), it is assumed that $f\in C^1(\mathbb{R})$ satisfy $f'(s)\geq \ell$, for some $\ell>0$ (cf. e.g. \cite[p. 213]{Robinson01}). 
Additionally, we cannot assume that the solutions are instantaneously regularizing.

The goal of this article is the provide a better description to the criteria that surrounds, not the well-posedness of problem (\ref{eqn})-(\ref{ic}), but rather the long-term behavior of the solutions to problem (\ref{eqn})-(\ref{ic}). The asymptotic behavior of solutions to PDE is a rich subject whose development we will only briefly mention. The study of dissipative dynamical systems is motivated by defining a solution operator for a given PDE, possibly posed abstractly as an ODE in a suitable Banach space, where the first task often is to demonstrate, besides global well-posedness, the existence of an absorbing set in the phase space. 
After that, one may demonstrate the solutions hold certain properties, like asymptotically smoothing.
In many efforts, the culmination of the study peaks with the existence of a global attractor, the maximal invariant subset of the phase space that attracts all trajectories.
This attractor is typically defined as the omega-limit set of a bounded absorbing set, and consists of smooth solutions. 
Some PDE also admit finite dimensional attractors whose rate of attraction is exponential. 

The study of dissipative dynamical systems and the development of attractors has flourished since the seminal work of  \cite{Babin&Vishik92,Ladyzhenskaya91,Temam88}. Furthermore, largely due to the permanent importance of the Navier-Stokes equations, attractors for PDE {\em{without unique solutions}} were also developed in \cite{Ball00} and \cite{Melnik&Valero98}. Indeed, generalized semiflows were employed in \cite{Ball04} and \cite{Segatti06} (just to name two applications). 
So-called trajectory dynamical systems were developed in \cite{Chepyzhov&Vishik02}.
Also, in the context of supercritical wave equations, the notion of trajectory dynamical systems appears in \cite{Zelik04}.

We will analyze the behavior of solutions for the above class of PDE in rather different terms: for guaranteed exponential convergence to zero. We will find conditions on the nonlinear term $f$ that guarantees the corresponding solutions exponentially decay to zero; hence, rendering some global solutions. Each of our decay results holds for all initial data $u_0\in H^1_0(\Omega)$ and all $\nu>0$. 
The criteria we use for each result depends on $f$ through a property we call the ``mean value'' of $f$ through $u$; named after the Mean Value Theorem for Integrals. 
It is important to note that we assume a solution $u$ (in the sense defined below) already exists, at least local in time. 
Hence, the estimates that follow are {\em{a priori}}, but insure a strict qualitative behavior for all nontrivial solutions.
The method is used to investigate a well-known blow-up result for semilinear parabolic PDE. For certain initial data, we find a positive time, using the mean value of the very solution, at which existence of the solution is no longer guaranteed.
In addition, an application of our method is given. 
This concerns a semilinear parabolic PDE with boundary degeneracy recently studied by \cite{Wang13} (surely an extension of problem (\ref{eqn})-(\ref{ic}) described above). 

Notation: $u_t=\frac{\partial u}{\partial t}$ and $\Delta$ denotes the Laplace differential operator on $\Omega$ with domain $H^2(\Omega)\cap H^1_0(\Omega)$. The symbols $|u|_p$ and $\|u\|$ denote the norm of $u$ in, respectively, $L^p(\Omega)$ and $L^2(\Omega)$, and $\langle u,u \rangle = \|u\|^2$. 
The measure of $\Omega$ is denoted by $|\Omega|:=\int_\Omega \diff x$.
Throughout, $\lambda_1>0$ will denote the Poincar\'{e} constant; $\|u\|^2 \leq \frac{1}{\lambda_1} \|\nabla u\|^2$. We write $H^{-1}(\Omega)$ to denote the dual of the space $H^1(\Omega)$. 
Finally, we will commonly identify $u(x,t)\equiv u(t)(x)$; e.g., $u(t)\in H^1_0(\Omega)$, $t>0$, and in many instances we will abbreviate $u(x,t)$ by simply $u$.

\section{The {\em{a priori}} results}  \label{s:2}

The following is the usual notion of a (weak) solution to problem (\ref{eqn})-(\ref{ic}). 

\begin{definition}
\label{d:weak-solution} 
Let $0<T\leq+\infty$. The function $u$ satisfying 
\begin{eqnarray}
u &\in& L^{\infty }(0,T;L^2(\Omega)) \cap L^{2}(0,T;H^1_0(\Omega)), \label{defn-1} \\
\partial_t u &\in& L^2(0,T;H^{-1}(\Omega)), \label{defn-2} 
\end{eqnarray}
is said to be a {\sc{weak solution}} to problem (\ref{eqn})-(\ref{ic}) if, for all $\varphi\in H^1_0(\Omega)$, and for almost all $t\in [0,T]$, there holds, 
\begin{equation*}
\label{weak-solution-1}
\langle u_t,\varphi \rangle + \nu
\langle \nabla u,\nabla \varphi \rangle + \langle f(u)u,\varphi \rangle = 0.
\end{equation*}
In addition, 
\begin{equation*}
\label{weak-solution-3}
u(0)=u_{0}. 
\end{equation*}
The function $[0,T]\ni t\mapsto u(t)\in H^1_0(\Omega)$ is called a {\sc{global weak solution}} if it is a weak solution for every $T>0$.
\end{definition}

Here it is well known that (cf. e.g. \cite[Lemma II.3.2]{Temam88}),
\[
u\in C([0,T];L^2(\Omega)),
\]
and for almost all $t\in[0,T]$, 
\begin{equation}
\label{Temam}
\frac{\diff}{\diff t}\|u\|^2 = 2\langle u_t,u \rangle.
\end{equation}

\begin{proposition}
\label{MV-lemma}
Let $T>0$. Suppose $u\in C([0,T];L^2(\Omega))$ and $f\in C(\mathbb{R})$.
Then there is $\xi\in C([0,T];\Omega)$ in which 
\begin{equation}
\label{xi}
\int_\Omega f(u(x,t))u^2(x,t) \diff x = f(u(\xi(t),t))\int_\Omega u^{2}(x,t) \diff x.
\end{equation}
\end{proposition}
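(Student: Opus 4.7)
My approach would be to read the statement as a parametric weighted Mean Value Theorem for Integrals: with non-negative weight $u^2(\cdot,t)$ and continuous integrand $f\circ u(\cdot,t)$, I expect to produce $\xi(t)\in\Omega$ realizing the weighted average of $f\circ u$. The argument splits naturally into (i) pointwise existence of $\xi(t)$ for each fixed $t$ by a weighted intermediate value argument, and (ii) continuity of the resulting map $t\mapsto\xi(t)$.

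\textbf{Pointwise existence.} I would fix $t\in[0,T]$. If $u(\cdot,t)\equiv 0$, any $\xi(t)\in\Omega$ satisfies \eqref{xi} trivially; otherwise I set
$$M(t):=\frac{\int_\Omega f(u(x,t))\,u^2(x,t)\,\diff x}{\int_\Omega u^2(x,t)\,\diff x}.$$
Since $u^2\ge 0$, one has $\inf_\Omega f(u(\cdot,t))\le M(t)\le\sup_\Omega f(u(\cdot,t))$. Because $\Omega$ is connected and $f\circ u(\cdot,t)$ is continuous on $\Omega$ (read through a continuous representative of $u$), its image is an interval containing $M(t)$, so by the intermediate value theorem there exists $\xi(t)\in\Omega$ with $f(u(\xi(t),t))=M(t)$. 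Multiplying by $\int_\Omega u^2\,\diff x$ then yields \eqref{xi}.

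\textbf{Continuity in $t$.} Next I would check that $t\mapsto M(t)$ is continuous on $[0,T]$, using continuity of $u$ in $L^2$, continuity of $f$, and a local $L^\infty$-bound on $u$ (available in the applications to which this proposition is put). The level-set multifunction $\Xi(t):=(f\circ u(\cdot,t))^{-1}(\{M(t)\})$ is then closed-valued and nonempty for every $t$; I would extract a continuous selection $\xi(\cdot)\in C([0,T];\Omega)$ either from a Michael-type selection theorem after verifying lower hemicontinuity of $\Xi$, or more concretely from the implicit function theorem on the open set where $\nabla(f\circ u)\neq 0$, patched over its complement by compactness.

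\textbf{Main obstacle.} I expect the bottleneck to be Step (ii): producing $\xi$ as a genuinely continuous function of $t$ rather than merely a pointwise selection. This also forces the issue of pointwise values of $u$, since the hypothesis $u\in C([0,T];L^2(\Omega))$ alone does not give continuity in $x$; one either fixes a continuous representative of $u$ once and for all, or tacitly supplements the hypothesis with enough spatial regularity (automatic when $N=1$ via Sobolev embedding) to make $f\circ u$ pointwise continuous in $x$ and the IVT applicable.
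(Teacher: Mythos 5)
Your overall strategy coincides with the paper's: the paper proves this proposition by deferring to the appendix result (Theorem \ref{trick}), which at each fixed $t$ sandwiches the weighted average between the extrema of the integrand and invokes the Intermediate Value Theorem on the connected domain $\Omega$ --- exactly your step (i) --- and then argues continuity of the selection in a second step. Your pointwise-existence argument is therefore the paper's argument, and you correctly flag the same delicate point the paper glosses over: $u(\cdot,t)\in L^2(\Omega)$ has no pointwise values, so $f(u(\xi(t),t))$ is not well defined without additional spatial regularity (the paper's attempted fix, approximating by $C_c(\Omega)$ functions and ``passing to the limit by density,'' does not actually produce a point $\xi(t')$ for the limiting $L^1$ functions, and your ``fix a continuous representative'' presumes such a representative exists).

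The genuine gap in your proposal is step (ii), which you acknowledge as the bottleneck but do not close, and the two tools you name would not close it. Michael's selection theorem requires the multifunction $\Xi(t)=(f\circ u(\cdot,t))^{-1}(\{M(t)\})$ to be lower hemicontinuous with closed \emph{convex} values; level sets of $f\circ u(\cdot,t)$ have no reason to be convex, nor is lower hemicontinuity automatic, since the level set can jump as $t$ varies. The implicit-function route requires $f\circ u$ to be $C^1$ in $x$ with nonvanishing gradient, which is unavailable when $f$ is merely $C^0$ and $u$ merely $L^2$ in $x$. For what it is worth, the paper's own continuity argument is no stronger: it establishes only that $t\mapsto f(\xi(t),t)$ is continuous (being the ratio of two continuous functions of $t$) and then asserts $\xi\in C([0,T];\Omega)$, which does not follow. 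Since everything downstream (Theorem \ref{general-result} onward) uses only the scalar quantity $f(u(\xi(t),t))=M(t)$, the continuity of $M$ --- which your step (ii) begins to establish --- is what the applications actually need; continuity of the selection $\xi$ itself is proved neither by your argument nor by the paper's.
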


\begin{proof}
The existence of the function $\xi\in C([0,T];\Omega)$ in (\ref{xi}) follows from the Mean Value Theorem for Integrals after some straightforward generalizations. See Theorem \ref{trick}.
\end{proof}

The following theorem provides the general result. 
It concerns the behavior of $f$ on solutions that are restricted to the path $(\xi(t),t)\in\Omega\times[0,T]$, where the function $\xi\in C([0,T];\Omega)$ is due to Proposition \ref{MV-lemma}.
Recall that, when $u$ is a solution to problem (\ref{eqn})-(\ref{ic}), then $u(\xi(t),t)\in\mathbb{R}$ for each $t\in[0,T]$.

\begin{theorem}
\label{general-result}
Let $\nu>0$, $f\in C(\mathbb{R})$ and $u_0\in H^1_0(\Omega)$. Suppose $u=u(x,t)$ is a weak solution to problem (\ref{eqn})-(\ref{ic}) for $t\in[0,T]$, for some $T>0$. 
Then $u$ satisfies, for all $t\in[0,T]$,
\begin{equation}
\label{general-condition}
\|u(t)\| \leq \exp\left[-\int_0^t f(u(\xi(\tau),\tau))\diff\tau \right] \|u_0\|.
\end{equation}
\end{theorem}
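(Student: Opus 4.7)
The plan is to derive an energy-type differential inequality for $\|u(t)\|^2$, then apply an integrating-factor argument to obtain the exponential bound. The hypotheses of Definition \ref{d:weak-solution} supply exactly the regularity needed to test the weak formulation against the solution itself and to interpret the time derivative classically via (\ref{Temam}).

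First, since $u\in L^2(0,T;H^1_0(\Omega))$, for almost every $t\in[0,T]$ the slice $u(t)$ belongs to $H^1_0(\Omega)$ and is therefore an admissible test function. I would take $\varphi=u(t)$ in the weak formulation to obtain, for a.e.\ $t\in[0,T]$,
\begin{equation*}
\langle u_t,u\rangle + \nu\|\nabla u\|^2 + \int_\Omega f(u(x,t))u^2(x,t)\diff x = 0.
\end{equation*}
Next, invoking the chain rule identity (\ref{Temam}) rewrites the first term as $\tfrac{1}{2}\tfrac{\diff}{\diff t}\|u\|^2$, while Proposition \ref{MV-lemma} converts the nonlinear term into $f(u(\xi(t),t))\|u(t)\|^2$. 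The result is the pointwise identity
\begin{equation*}
\tfrac{1}{2}\tfrac{\diff}{\diff t}\|u(t)\|^2 + \nu\|\nabla u(t)\|^2 + f(u(\xi(t),t))\,\|u(t)\|^2 = 0
\end{equation*}
valid for a.e.\ $t\in[0,T]$. Dropping the nonnegative dissipation term $\nu\|\nabla u\|^2$ yields the differential inequality
\begin{equation*}
\tfrac{\diff}{\diff t}\|u(t)\|^2 \leq -2f(u(\xi(t),t))\,\|u(t)\|^2.
\end{equation*}

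Since $\xi\in C([0,T];\Omega)$ (from Proposition \ref{MV-lemma}) and $f\in C(\mathbb{R})$, the scalar function $t\mapsto f(u(\xi(t),t))$ is at worst bounded and measurable on $[0,T]$, so the exponential integrating factor $\exp\bigl[2\int_0^t f(u(\xi(\tau),\tau))\diff\tau\bigr]$ is well-defined. Multiplying the inequality by this factor shows $\tfrac{\diff}{\diff t}\bigl(\exp[2\int_0^t f(u(\xi(\tau),\tau))\diff\tau]\|u(t)\|^2\bigr)\leq 0$, so this product is nonincreasing. Evaluating at $0$ and at $t$ and taking square roots gives (\ref{general-condition}).

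The main obstacle is really just making sure the above manipulations are legitimate at the level of weak solutions, since $u_t$ lies only in $L^2(0,T;H^{-1}(\Omega))$; this is handled by (\ref{Temam}) together with the fact that $u\in C([0,T];L^2(\Omega))$. A secondary point is the measurability and integrability of $f(u(\xi(\cdot),\cdot))$, but this follows from the continuity assertion in Proposition \ref{MV-lemma} combined with the continuity of $f$. Everything else is a textbook Gronwall-type estimate.
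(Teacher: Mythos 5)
Your proposal is correct and follows essentially the same route as the paper's own proof: testing against $u$, invoking (\ref{Temam}) and Proposition \ref{MV-lemma} to obtain the identity (\ref{id-2}), dropping the dissipation term, and integrating. Your treatment is in fact slightly more explicit about the integrating-factor step and the measurability of $t\mapsto f(u(\xi(t),t))$, which the paper leaves implicit.
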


\begin{proof}
Let $u$ be a solution to problem (\ref{eqn})-(\ref{ic}) according to Definition \ref{d:weak-solution}. 
Since we are only interested in estimates at the {\em{a priori}} level, we are allowed to formally multiply (\ref{eqn}) by $u=u(x,t)$ in $L^2(\Omega)$ to obtain the identity, which holds for almost all $t\in(0,T)$,
\begin{equation}
\label{id-1}
\langle u_t,u \rangle - \nu \langle \Delta u,u \rangle + \langle f(u),u^2 \rangle = 0.
\end{equation}
First we recall (\ref{Temam}), then Green's first identity to see that there holds,
\[
\nu\langle -\Delta u, u \rangle = \nu\|\nabla u\|^2.
\]
Next, by Proposition \ref{MV-lemma}, we now know that there is $\xi\in C([0,T],\Omega)$ in which,
\begin{equation}
\label{id-2-f}
\langle f(u)u,u \rangle = f(u(\xi(t),t)) \|u\|^2.
\end{equation}
Together, (\ref{id-1}) becomes, for almost all $t\in(0,T)$, 
\begin{equation}
\label{id-2}
\frac{1}{2}\frac{\diff}{\diff t}\|u\|^2 + \nu\|\nabla u\|^2 + f(u(\xi(t),t)) \|u\|^2 = 0.
\end{equation}
Omitting the term $\nu\|\nabla u\|^2$ produces the differential inequality,
\begin{equation*}
\frac{1}{2}\frac{\diff}{\diff t}\|u\|^2 + f(u(\xi(t),t)) \|u\|^2 \leq 0,
\end{equation*}
and from this we find (\ref{general-condition}). 
\end{proof}

\begin{remark}
After multiplying identity (\ref{id-2}) by $\exp[\int_0^t 2f(u(\xi(\tau),\tau))\diff\tau]$ and integrating with respect to $t$ on $[0,T]$, we arrive at the new identity,
\[\begin{aligned}
& \|u\|^2 + 2\nu\int_0^t \exp\left[-\int_s^t 2f(u(\xi(\tau),\tau))\diff\tau \right]\|\nabla u(x,s)\|^2 \diff s \\
& = \exp\left[-\int_0^t 2f(u(\xi(\tau),\tau))\diff\tau \right]\|u_0\|^2.
\end{aligned}\]
Notice that for any $f\in C(\mathbb{R})$, we recover the bounds 
\[
u\in L^\infty(0,T;L^2(\Omega))\cap L^2(0,T;H^1_0(\Omega)),
\]
with dependence on $T>0$.
\end{remark}

With Theorem \ref{general-result}, we may now move onto the consideration of the case when solutions are guaranteed to exponentially decay to zero. Obviously, one immeditely read from (\ref{general-condition}) that exponential decay for solutions $u$ in $L^2(\Omega)$ occurs on the time intervals where
\[
F(t):=\int_0^t f(u(\xi(\tau),\tau)) \diff\tau < 0.
\]
However, given an arbitrary continuous function $f$, we seek conditions, with pragmatic assumptions on $f$, which guarantee solutions $u$ decay to zero exponentially, in $L^2(\Omega)$, as $t\rightarrow+\infty$.
We encounter the first assumption we can make that insures solutions to problem (\ref{eqn})-(\ref{ic}) decay exponentially to zero; when $f$ is {\em{positive}}.

\begin{corollary}
\label{exp-zero-result}
Let $f\in C(\mathbb{R})$ be a positive function on $\mathbb{R}$. Then for any $u_0\in H^1_0(\Omega)$ and $\nu>0$, the corresponding solutions $u$ to problem (\ref{eqn})-(\ref{ic}) are global ones (i.e., $T=+\infty$) and $\|u(t)\|$ exponentially decays to zero as $t\rightarrow +\infty$.
\end{corollary}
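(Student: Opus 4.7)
The plan is to sharpen the calculation that produced Theorem \ref{general-result} by retaining, rather than discarding, the dissipation term. Starting from identity (\ref{id-2}),
\[
\frac{1}{2}\frac{\diff}{\diff t}\|u\|^2 + \nu\|\nabla u\|^2 + f(u(\xi(t),t))\|u\|^2 = 0,
\]
the positivity hypothesis on $f$ makes the final term non-negative pointwise in $t$. Dropping that term and applying the Poincar\'e inequality $\|\nabla u\|^2 \geq \lambda_1 \|u\|^2$ to the dissipation gives the differential inequality $\frac{\diff}{\diff t}\|u\|^2 \leq -2\nu\lambda_1 \|u\|^2$, and a direct application of Gronwall's inequality yields $\|u(t)\| \leq e^{-\nu\lambda_1 t}\|u_0\|$, which is the desired exponential decay (at the uniform rate $\nu\lambda_1$ determined by the linear part of the equation).

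For the global existence claim ($T=+\infty$), I would observe that the same inequality furnishes the uniform a priori bound $\|u(t)\| \leq \|u_0\|$ on any interval $[0,T]$ on which the weak solution of Definition \ref{d:weak-solution} is known to exist. Coupled with the identity recorded in the Remark following Theorem \ref{general-result}, which additionally controls $\int_0^T \|\nabla u(s)\|^2 \diff s$ independently of $T$, one gets uniform bounds in $L^\infty(0,T;L^2(\Omega)) \cap L^2(0,T;H^1_0(\Omega))$ that do not deteriorate as $T$ grows. A standard continuation argument then rules out finite-time cessation of existence of the weak solution, so $T$ may be taken to be $+\infty$.

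The one subtlety worth flagging is that the hypothesis $f>0$ on $\mathbb{R}$ does \emph{not} supply a uniform positive lower bound for $f(u(\xi(\tau),\tau))$ along the trajectory: the values $u(\xi(\tau),\tau)$ are only known to be real numbers, and could conceivably visit regions where $f$ is arbitrarily small. Consequently one should not try to extract decay from the factor $\exp[-\int_0^t f(u(\xi(\tau),\tau))\diff\tau]$ in (\ref{general-condition}) directly, since $F(t)$ is only known to be positive, not to tend to $+\infty$. Using Poincar\'e circumvents this entirely, since all that is needed from $f$ is the non-negativity of $f(u(\xi(t),t))\|u\|^2$. This is the one place where the proof must be slightly more careful than a naive reading of (\ref{general-condition}) suggests, and is the only real (though minor) obstacle in the argument.
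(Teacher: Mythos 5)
Your proof is correct, but it takes a genuinely different route from the one the paper intends. The paper offers no separate proof of Corollary \ref{exp-zero-result}: it is meant to be read off from Theorem \ref{general-result}, i.e.\ from the bound $\|u(t)\|\leq\exp[-F(t)]\|u_0\|$ with $F(t)=\int_0^t f(u(\xi(\tau),\tau))\,\diff\tau$, the point being that positivity of $f$ forces $F(t)>0$. You instead return to the identity (\ref{id-2}), keep the dissipation term $\nu\|\nabla u\|^2$ that Theorem \ref{general-result} discards, apply Poincar\'e, and drop the now non-negative term $f(u(\xi(t),t))\|u\|^2$, arriving at $\|u(t)\|\leq e^{-\nu\lambda_1 t}\|u_0\|$. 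This is essentially the mechanism of the paper's own Theorem \ref{bounded} specialized to $\inf f\geq 0>-\nu\lambda_1$, and it buys something real: an explicit, uniform decay rate $\nu\lambda_1$ independent of $f$. The subtlety you flag is well taken and is in fact a genuine weakness of the paper's implicit argument --- positivity of $f$ on $\mathbb{R}$ gives only $F(t)>0$, hence $\|u(t)\|\leq\|u_0\|$, and without a positive lower bound for $f$ along the (pointwise uncontrolled) values $u(\xi(\tau),\tau)$ one cannot conclude $F(t)\gtrsim t$ and hence cannot extract a rate from (\ref{general-condition}) alone. Your Poincar\'e-based detour closes that gap cleanly, and your continuation argument for $T=+\infty$ is at the same (a priori) level of rigor as the rest of the paper.
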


Recall that, for all $\varphi\in H^1_0(\Omega)$,
\begin{equation}
\label{Poincare}
\|\varphi\| \leq \frac{1}{\sqrt{\lambda_1}}\|\varphi_x\|,
\end{equation}
where $\lambda_1$ is the fist eigenvalue of the Laplacian with respect to homogeneous Dirichlet boundary conditions. Of course, (\ref{Poincare}) is known as the Poincar\'{e} inequality.

Under certain conditions, solutions will exponentially decay to zero when $f$ is not necessarily positive on all of $\mathbb{R}$. Because of the Poincar\'{e} inequality, $f$ may be allowed to assume some negative values, and we may still guarantee that solutions exponentially decay to zero.
The following extensions are for when $f\in C(\mathbb{R})$ is (eventually) {\em{bounded below}} by $-\nu\lambda_1$.

\begin{theorem}
\label{bounded}
Let $\nu>0$ and $f\in C(\mathbb{R})$.
Suppose $f$ satisfies the lower-bound,
\begin{equation}
\label{below}
\inf_{s\in\mathbb{R}} f(s) > -\nu\lambda_1.
\end{equation}
For any $u_0\in H^1_0(\Omega)$, the corresponding solutions $u$ to problem (\ref{eqn})-(\ref{ic}) are global ones (i.e., $T=+\infty$) and $\|u(t)\|$ exponentially decays to zero as $t\rightarrow +\infty$.
\end{theorem}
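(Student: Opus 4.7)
The plan is to pick up the proof of Theorem \ref{general-result} at identity (\ref{id-2}) but, unlike the proof given there, to \emph{retain} the dissipative term $\nu\|\nabla u\|^2$ and absorb it against $\|u\|^2$ via the Poincar\'{e} inequality (\ref{Poincare}). Setting $m := \inf_{s\in\mathbb{R}} f(s)$, the hypothesis (\ref{below}) says $m > -\nu\lambda_1$, so $\nu\lambda_1 + m > 0$. Since $f(u(\xi(t),t)) \geq m$ pointwise in $t$ and $\nu\|\nabla u\|^2 \geq \nu\lambda_1 \|u\|^2$, the identity
\[
\frac{1}{2}\frac{\diff}{\diff t}\|u\|^2 + \nu\|\nabla u\|^2 + f(u(\xi(t),t))\|u\|^2 = 0
\]
converts to the scalar differential inequality
\[
\frac{\diff}{\diff t}\|u\|^2 + 2(\nu\lambda_1 + m)\|u\|^2 \leq 0.
\]

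Integrating (or applying the scalar Gronwall lemma) then yields the bound $\|u(t)\|^2 \leq e^{-2(\nu\lambda_1+m)t}\|u_0\|^2$, which is the claimed exponential decay with strictly positive rate $\nu\lambda_1 + m$. The estimate is a priori: it holds on any interval $[0,T]$ on which a weak solution in the sense of Definition \ref{d:weak-solution} exists, with $T$-independent constants. Alternatively, one could read the same conclusion directly from (\ref{general-condition}) after replacing the integrand by its lower bound, but keeping (\ref{id-2}) is the cleanest route because it already contains the Poincar\'{e}-absorbable term.

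Globality then follows by continuation. The decay estimate furnishes a uniform $L^\infty_t L^2_x$ bound on $u$, and combined with the $L^2(0,T;H^1_0(\Omega))$ control coming from the identity recorded in the Remark following Theorem \ref{general-result}, no finite-time breakdown can occur in the natural energy norm, so a given local weak solution extends to $T = +\infty$.

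I do not foresee any substantive obstacle. The strict inequality in (\ref{below}) is exactly the amount of slack needed for the Poincar\'{e} dissipation to dominate the worst-case negative contribution of $f$ and make $\nu\lambda_1 + m$ positive. The only mildly delicate point is the globality step: since uniqueness is not assumed, the conclusion must be phrased as the extension of \emph{some} weak solution from $u_0$ to all $t \geq 0$ using the a priori bounds, rather than as a statement about \emph{the} solution.
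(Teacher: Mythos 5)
Your proposal is correct and follows essentially the same route as the paper: both start from identity (\ref{id-2}), absorb the gradient term via the Poincar\'{e} inequality, and use the lower bound (\ref{below}) to make the resulting exponent negative. If anything, your version is slightly sharper in presentation, since replacing $f(u(\xi(t),t))$ by the uniform infimum $m$ before integrating makes the strictly positive decay rate $\nu\lambda_1+m$ explicit, whereas the paper integrates first and then invokes the pointwise bound $f(u(\xi(\tau),\tau))>-\nu\lambda_1$.
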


\begin{proof}
Let $u_0\in H^1_0(\Omega)$ and assume $u$ is a solution to problem (\ref{eqn})-(\ref{ic}) according to Definition \ref{d:weak-solution}. 
After applying the Poincar\'{e} inequality (\ref{Poincare}) to the identity (\ref{id-2}), we arrive at the differential inequality, which holds, for almost all $t\in(0,T)$,
\begin{equation}
\label{bnd-ineq-1}
\frac{\diff}{\diff t}\|u\|^2 + 2\left( \nu\lambda_1 + f(u(\xi(t),t)) \right)\|u\|^2 \leq 0.
\end{equation}
Integrating (\ref{bnd-ineq-1}) with respect to $t$ on $[0,T]$ yields,
\begin{equation}
\label{bnd-ineq-2}
\|u(t)\| \leq \exp \left[ - \int_0^t \left( \nu\lambda_1 + f(u(\xi(\tau),\tau)) \right) \diff \tau \right] \|u_0\|.
\end{equation}
At this point we recall assumption (\ref{below}) which together implies,
\begin{equation}
\label{bound-2}
f(u(\xi(t),t)) > -\nu\lambda_1, \quad\forall t\in[0,T].
\end{equation}
Thus, together (\ref{bnd-ineq-2}) and (\ref{bound-2}) show that $u$ is globally defined and exponentially decays to zero as $t\rightarrow +\infty$.
This completes the proof.
\end{proof}

Of course when $f$ is strictly bounded below by $-\nu\lambda_1$, we are allowed the take any initial data.
For the results that follow, solutions $u$ are assumed to be {\em{positive}}. Similar results for negative solutions can be show with minor modifications.
We will now show that for any initial data, positive solutions $u$ converge to zero exponentially (in $L^2(\Omega)$) when $f(s)$, $s\geq 0$, is bounded below in an appropriate manor.

\begin{theorem}
\label{eventual}
Let $\nu>0$ and $f\in C([0,+\infty))$. Suppose that the average value of $f(\sigma)$ on $(0,s)$ satisfies the lower-bound, for all $s>0$,
\begin{equation}
\label{mvp}
f_{\rm{avg}}(s) = \frac{1}{s} \int_0^s f(\sigma)\diff\sigma > -\nu\lambda_1.
\end{equation}
Then for any $u_0\in H^1_0(\Omega)$, the corresponding solutions $u$ to problem (\ref{eqn})-(\ref{ic}) are global ones (i.e., $T=+\infty$) and $\|u(t)\|$ exponentially decays to zero as $t\rightarrow +\infty$.
\end{theorem}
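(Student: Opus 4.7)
The plan is to follow the proof of Theorem \ref{bounded} as far as possible and then use the averaged assumption (\ref{mvp}) to supply the pointwise lower bound that is now missing. Multiplying (\ref{eqn}) by $u$ and invoking (\ref{Temam}), Green's identity, and Proposition \ref{MV-lemma} exactly as in the proof of Theorem \ref{general-result}, I obtain
\[
\frac{1}{2}\frac{\diff}{\diff t}\|u\|^2+\nu\|\nabla u\|^2+f(u(\xi(t),t))\|u\|^2=0.
\]
Applying the Poincar\'{e} inequality (\ref{Poincare}) and running Gronwall then yields
\[
\|u(t)\|\le \|u_0\|\exp\!\left[-\int_0^t\bigl(\nu\lambda_1+f(u(\xi(\tau),\tau))\bigr)\diff\tau\right],
\]
so everything reduces to showing that the exponent tends to $+\infty$ as $t\to+\infty$; this simultaneously rules out finite-time blow-up and furnishes exponential decay.

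To use (\ref{mvp}), I rewrite it as $H(s):=\int_0^s(\nu\lambda_1+f(\sigma))\diff\sigma>0$ for every $s>0$. Because the solution is assumed positive, the continuous function $\eta(t):=u(\xi(t),t)$ is strictly positive, and the hypothesis translates to $f_{\text{avg}}(\eta(t))>-\nu\lambda_1$ for every $t$. The key step I would carry out is a refinement of Proposition \ref{MV-lemma}---exploiting the generalized mean value theorem for integrals behind Theorem \ref{trick}---in which the continuous path $\xi$ can be selected so that $f(u(\xi(t),t))$ is identified with the $\sigma$-average $f_{\text{avg}}(\eta(t))$ rather than merely the $u^2$-weighted spatial average of $f(u(\cdot,t))$. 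Once that identification is in place, (\ref{mvp}) gives $\nu\lambda_1+f(u(\xi(t),t))>0$ uniformly in $t$, the exponent in the Gronwall bound grows at least linearly, and global existence together with exponential decay drop out at once.

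The main obstacle is precisely the upgrade of Proposition \ref{MV-lemma} just described. In its bare form, Proposition \ref{MV-lemma} only equates $f(u(\xi(t),t))$ with the spatial average of $f(u(\cdot,t))$ weighted by $u^2$, which is in general distinct from $f_{\text{avg}}(\eta(t))$. Closing this gap appears to require either a coarea / layer-cake rewriting of $\int_\Omega f(u)u^2\diff x$ in terms of the distribution function of $u(\cdot,t)$ (using that the range of a continuous positive function vanishing on $\Gamma$ is an interval of the form $[0,M(t)]$), or a continuous intermediate-value selection for $\xi$ exploiting continuity of $\eta$ in $t$. This is exactly where the paper's ``mean value'' philosophy is doing its real work, and where I expect the technical heart of the argument to reside.
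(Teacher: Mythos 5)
Your proposal reproduces the paper's argument up to the Gronwall bound: the paper likewise multiplies (\ref{eqn}) by $u$, applies Proposition \ref{MV-lemma}, (\ref{Temam}) and the Poincar\'{e} inequality to reach (\ref{bnd-ineq-2}), and then reduces everything to showing $\int_0^t\bigl(\nu\lambda_1+f(u(\xi(\tau),\tau))\bigr)\diff\tau>0$ for all $t\geq 0$. Up to that point you and the paper agree exactly.

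The difference lies in what happens next, and here your candor exposes a genuine gap. You correctly observe that (\ref{mvp}) is a statement about the average of $f$ over intervals $(0,s)$ in the \emph{state} variable $\sigma$, whereas what is needed is a lower bound on the \emph{time} average of $f$ along the path $\tau\mapsto u(\xi(\tau),\tau)$; these two averages do not coincide, and Proposition \ref{MV-lemma} only identifies $f(u(\xi(t),t))$ with the $u^2$-weighted spatial average of $f(u(\cdot,t))$, not with $f_{\rm{avg}}$ evaluated anywhere. You flag this as the main obstacle and sketch two possible repairs (a layer-cake rewriting of $\int_\Omega f(u)u^2\,\diff x$, or a refined continuous selection of $\xi$) without carrying either out, so your proposal is incomplete as written. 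You should know that the paper's own proof does not close this gap either: it passes from (\ref{mvp}) to the required time-integral inequality with the single sentence ``Thanks to assumption (\ref{mvp}), the proof is complete.'' The paper's own example $f_2$ shows why the step is not innocuous: there $f_2(s)<-\nu\lambda_1$ for all large $s$, so if the path $u(\xi(\tau),\tau)$ were to dwell in that region, the time integral could violate the required bound even though (\ref{mvp}) holds for every $s$. So the step you identified as the technical heart is genuinely missing from both your write-up and the paper; one of the devices you propose (or some substitute, such as a comparison or monotonicity argument along trajectories) would have to be supplied for the theorem to be proved as stated.
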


\begin{proof}
Let $u_0\in H^1_0(\Omega)$ and assume $u$ is a positive solution to problem (\ref{eqn})-(\ref{ic}) according to Definition \ref{d:weak-solution}. 
The claim follows directly by applying the assumption (\ref{mvp}) to (\ref{bnd-ineq-2}). Indeed, it suffices to show that, 
\[
\int_0^t \left( \nu\lambda_1 + f(u(\xi(\tau),\tau)) \right) \diff \tau >0, \quad\forall t\geq 0,
\]
or 
\[
-\nu\lambda_1 t < \int_0^t f(u(\xi(\tau),\tau))\diff\tau.
\]
Thanks to assumption (\ref{mvp}), the proof is complete.
\end{proof}

\subsection*{Examples}

1. First, we note that the well-known problem, the Chafee--Infante reaction diffusion equation,
\[
u_t - \nu\Delta u + u^3 - (\nu\lambda_1) u = 0,
\] 
satisfies condition (\ref{below}) because $f_1(s)=s^2-\nu\lambda_1$. (We will consider the finite time blow-up problem motivated by the nonlinear term $-f_1(s)s$ below.)

2. We also give an example of a function satisfying the condition (\ref{mvp}). Let us take $\nu=1$ and $\Omega=(0,1)$ so that $\lambda_1=1$ (cf. e.g. \cite{Hunter&Nachtergaele01} or \cite{Robinson01}). Define
\[
f_2(s) = \left\{ \begin{array}{ll}
1-s & \text{for $0 \leq s < 3$} \\ 
\frac{1-s}{s-2} & \text{for $3 \leq s$.}
\end{array} \right.
\]
Clearly, $f_2\in C^0([0,+\infty))$, and 
\[
f_{2\rm{avg}}(s):=\frac{1}{s}\int_0^s f_2(\sigma)\diff\sigma = \left\{ \begin{array}{ll}
1-\frac{s}{2} & \text{for $0\leq s <3$} \\ 
-1 -\frac{1}{s}\ln|s-2| +\frac{1}{s}\ln 2 & \text{for $3 \leq s$.}
\end{array} \right.
\]
See Figure \ref{fig3} below for the graphs of $f_2(s)$ and $f_{2\rm{avg}}(s)$. 

Some important properties of this function are:
\begin{enumerate}
\item[(i)] Here there holds,
\[
\liminf_{s\rightarrow+\infty}f(s) = -\nu\lambda_1,
\]
and notably $f_2(s)$ converges to $-\nu\lambda=-1$ from {\em{below}}; also  

\item[(ii)] the minimum average value of $f_2(s)$, on $(0,+\infty)$, is $-1$ and it is reached asymptotically, when $s\rightarrow+\infty$.
\end{enumerate}

\begin{figure}[htbp]
\centering
\includegraphics[scale=0.6]{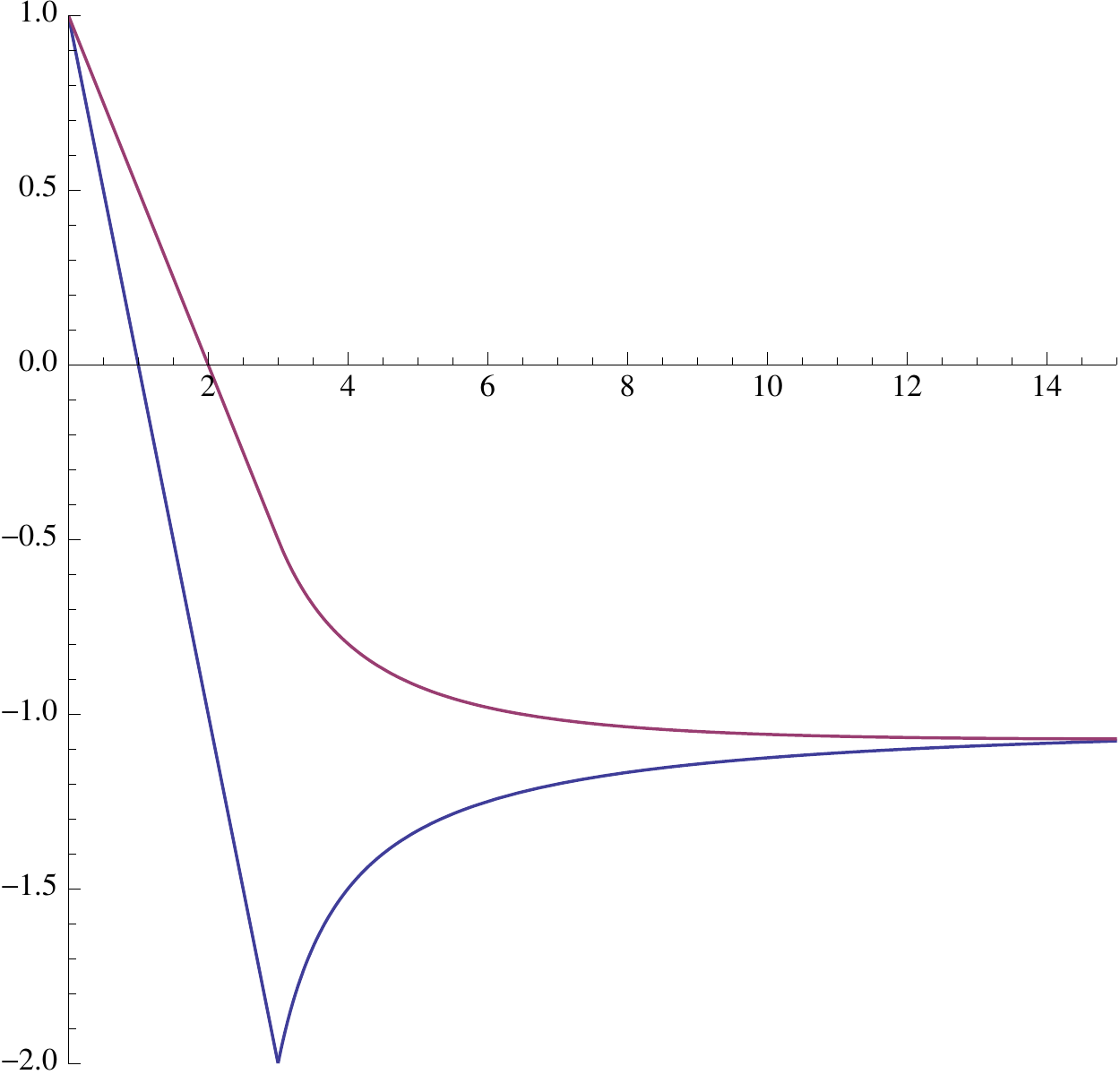}
\caption{The plot of $f_2(s)$ (blue) and $f_{2\rm{avg}}(s)$ (magenta) for $s\in(0,15]$.}
\label{fig3}
\end{figure}

Finally, we give an example of a nonlinear term $f$ satisfying the assumptions of Theorem \ref{eventual} where the nonlinear term $f$ is unbounded below. Consider the singular potential $f_3(s) = -(1-s)^{-p}$ on $s\in [0,1)$ for any $0 < p < 1 - \frac{1}{\nu\lambda_1}$. Then $f_3(s)$ is strictly decreasing on $[0,1)$, and 
\[
f_{3{\rm{avg}}}(s) = \int_0^1 -(1-s)^{-p} \diff s = \frac{1}{p-1} > -\nu\lambda_1.
\]

\begin{remark}
Notice that each assumption (\ref{below}) and (\ref{mvp}) implies that the same necessary condition on $f$ and the data $u_0$ holds:
\[
f(u(\xi(0),0)) = \frac{\langle f(u_0),u_0^2 \rangle}{\|u_0\|^2} > -\nu\lambda_1.
\]
\end{remark}

The final result in this section is motivated by the blow-up result in \cite[p. 176]{Zheng04}. Indeed, we give a description of a blow-up condition in terms of the ``mean value'' technique developed thus far. 

\begin{theorem}  \label{blow-up-result}
Let $\nu>0$ and $f\in C((0,+\infty))$. Suppose $f$ satisfies the upper- and lower-bounds, for all $s\in (0,+\infty)$,
\begin{equation}  \label{blow-up}
c_1 - \frac{c_2(r+2)}{4} s^r \leq f(s) \leq c_1 - c_2 s^{r},
\end{equation} 
for some $c_1\geq 0$, $c_2>0$ and some $r > 2$.
If $u_0\in H^1_0(\Omega)$ ($u_0>0$) satisfies the condition, 
\begin{equation}
\label{criteria-blow-up}
\nu\|\nabla u_0\|^2 + c_1\|u_0\|^2 < \frac{c_2}{2} |u_0|^{r+2}_{r+2},
\end{equation}
then the corresponding positive solutions $u$ to problem (\ref{eqn})-(\ref{ic}) possess the interval of existence $(0,t')$, where, 
\begin{equation}
\label{time}
t' = \frac{2|\Omega|^{(r-2)/2}}{c_2(r-2) u^2(\xi(t^*),t^*) \|u_0\|^{r-2}},
\end{equation}
for some $t^*=t^*(t)>0$.
\end{theorem}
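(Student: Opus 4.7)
The plan is to derive a differential inequality for $\Phi(t) := \|u(t)\|^2$ that rules out global existence. From identity (\ref{id-2}) together with the upper bound in (\ref{blow-up}), which gives $\int_\Omega f(u)u^2\diff x \leq c_1\|u\|^2 - c_2|u|_{r+2}^{r+2}$, I first extract
\[
\tfrac{1}{2}\dot{\Phi} + \nu\|\nabla u\|^2 + c_1\|u\|^2 \geq c_2|u|_{r+2}^{r+2}.
\]
Separately, multiplying (\ref{eqn}) by $u_t$ yields monotonicity of the free energy $E(u) := \tfrac{\nu}{2}\|\nabla u\|^2 + \int_\Omega F(u)\diff x$ with $F(s) := \int_0^s f(\sigma)\sigma\diff\sigma$; the lower bound in (\ref{blow-up}) integrates to $F(s) \geq \tfrac{c_1}{2}s^2 - \tfrac{c_2}{4}s^{r+2}$, whence $E(u(t)) \leq E(u_0)$ rearranges to
\[
\nu\|\nabla u(t)\|^2 \leq 2E(u_0) - c_1\|u(t)\|^2 + \tfrac{c_2}{2}|u(t)|_{r+2}^{r+2}.
\]
Combining these two estimates collapses everything to $\dot{\Phi} \geq c_2|u|_{r+2}^{r+2} - 4E(u_0)$.

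Condition (\ref{criteria-blow-up}) places $u_0$ in the ``unstable set'' $\{u \in H^1_0(\Omega) : I(u)<0\}$, where $I(u) := \nu\|\nabla u\|^2 + c_1\|u\|^2 - \tfrac{c_2}{2}|u|_{r+2}^{r+2}$. I would argue the forward-invariance of this set along solutions of (\ref{eqn})--(\ref{ic}) --- a potential-well style claim leveraging the monotonicity of $E$ and continuity in time --- so that $\nu\|\nabla u(t)\|^2 + c_1\|u(t)\|^2 < \tfrac{c_2}{2}|u(t)|_{r+2}^{r+2}$ persists throughout the life span of $u$. Feeding this back into the first display eliminates the $\|\nabla u\|^2$ and $\|u\|^2$ contributions altogether, upgrading the estimate to $\dot{\Phi}(t) \geq c_2|u(t)|_{r+2}^{r+2}$.

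Next I would invoke the ``mean value'' principle (Proposition \ref{MV-lemma}/Theorem \ref{trick}) on the decomposition $\int_\Omega u^{r+2}\diff x = \int_\Omega u^2 \cdot u^r \diff x$ with the non-negative weight $u^r$, producing $\xi(t)\in\Omega$ with $|u(t)|_{r+2}^{r+2} = u^2(\xi(t),t)\int_\Omega u^r(x,t)\diff x$. H\"older's inequality $\|u\|^2 \leq |\Omega|^{(r-2)/r}(\int_\Omega u^r\diff x)^{2/r}$ gives $\int_\Omega u^r\diff x \geq \|u\|^r/|\Omega|^{(r-2)/2}$, so the inequality refines to
\[
\dot{\Phi}(t) \geq \frac{c_2\, u^2(\xi(t),t)}{|\Omega|^{(r-2)/2}}\,\Phi(t)^{r/2}.
\]
Since $r > 2$, separating variables and integrating over $(0,t)$ delivers
\[
\tfrac{2}{r-2}\bigl(\Phi(0)^{-(r-2)/2} - \Phi(t)^{-(r-2)/2}\bigr) \geq \tfrac{c_2}{|\Omega|^{(r-2)/2}}\int_0^t u^2(\xi(\tau),\tau)\diff\tau,
\]
and one last application of the Mean Value Theorem for integrals in time writes $\int_0^t u^2(\xi(\tau),\tau)\diff\tau = t\,u^2(\xi(t^*),t^*)$ for some $t^*=t^*(t)\in(0,t)$. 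Taking $t\uparrow t'$ so that $\Phi(t')^{-(r-2)/2}\to 0$ and solving for $t'$ produces exactly (\ref{time}) with $\Phi(0) = \|u_0\|^2$.

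The main obstacle I foresee is the forward-invariance of $\{I<0\}$: condition (\ref{criteria-blow-up}) does not directly force $E(u_0) \leq 0$, since halving it produces the constant $c_2/4$, which lies strictly above the $c_2/(r+2)$ appearing in the upper $F$-bound. Thus the invariance has to be argued by contradiction, positing a first crossing time $t_1$ at which $I(u(t_1))=0$ and playing the lower estimate $E(u(t_1)) \geq \tfrac{c_2(r-2)}{4(r+2)}|u(t_1)|_{r+2}^{r+2} > 0$ against the monotonicity $E(u(t_1))\leq E(u_0)$ and the strict positivity of $\dot{\Phi}$ on $[0,t_1)$.
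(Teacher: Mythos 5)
Your architecture tracks the paper's almost step for step: test with $u$ and use the upper bound in \eqref{blow-up}; dispose of $\nu\|\nabla u\|^2+c_1\|u\|^2$ via a decreasing functional; decompose $|u|_{r+2}^{r+2}=u^2(\xi(t),t)\,|u|_r^r$ by the mean value theorem for integrals; apply H\"older, separate variables, and finish with a mean value theorem in time. Your last two displays are exactly the paper's \eqref{blow-up-3}--\eqref{time-2}, and you land on the same $t'$. The divergence is in the middle, and that is where your proposal has a genuine gap. The paper never introduces your energy $E(u)=\tfrac{\nu}{2}\|\nabla u\|^2+\int_\Omega F(u)\,\diff x$: it takes your functional $I$ itself, i.e.\ $\mathcal{E}(\varphi)=\nu\|\nabla \varphi\|^2+c_1\|\varphi\|^2-\tfrac{c_2}{2}|\varphi|^{r+2}_{r+2}$, as the monotone quantity, computes $\tfrac12\tfrac{\diff}{\diff t}\mathcal{E}=\langle -\nu\Delta u+(c_1-\tfrac{c_2(r+2)}{4}u^r)u,\,u_t\rangle$, and invokes the lower bound in \eqref{blow-up} --- whose constant $\tfrac{c_2(r+2)}{4}$ is calibrated precisely so that $\tfrac{c_2(r+2)}{4}u^{r+1}$ is half the variational derivative of $\tfrac{c_2}{2}|u|^{r+2}_{r+2}$ --- to conclude $\mathcal{E}(t)\le\mathcal{E}(0)<0$. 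That makes $\{I<0\}$ trivially forward invariant; this is the role of the lower bound that your argument leaves unused at the decisive moment.

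Your proposed repair of the invariance does not close, for two reasons. First, the claimed lower bound $E(u(t_1))\ge\tfrac{c_2(r-2)}{4(r+2)}|u(t_1)|^{r+2}_{r+2}$ at a first crossing time $I(u(t_1))=0$ uses the wrong side of the sandwich: the hypothesis $f(s)\ge c_1-\tfrac{c_2(r+2)}{4}s^r$ gives $F(s)\ge\tfrac{c_1}{2}s^2-\tfrac{c_2}{4}s^{r+2}$ and hence only $E(u)\ge\tfrac12 I(u)$, so at the crossing you obtain merely $E(u(t_1))\ge 0$; the identity $E=\tfrac12 I+\tfrac{c_2(r-2)}{4(r+2)}|u|^{r+2}_{r+2}$ is valid for the exact power $f(s)=c_1-c_2s^r$, not under \eqref{blow-up}. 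Second, even granting a strictly positive bound at $t_1$, there is nothing to contradict: as you yourself observe, \eqref{criteria-blow-up} yields only $E(u_0)<\tfrac{c_2(r-2)}{4(r+2)}|u_0|^{r+2}_{r+2}$, a positive quantity, and you have no comparison between $|u(t_1)|_{r+2}$ and $|u_0|_{r+2}$ (monotonicity of $\|u(t)\|$ controls the $L^2$ norm, not the $L^{r+2}$ norm). This is the standard difficulty with the Payne--Sattinger unstable set: invariance of $\{I<0\}$ is normally proved under an additional hypothesis such as $E(u_0)<d$ (the well depth) or $E(u_0)\le 0$, neither of which follows from \eqref{criteria-blow-up}. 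As written, your argument proves the theorem only under the extra assumption $E(u_0)\le 0$, in which case $\dot\Phi\ge c_2|u|^{r+2}_{r+2}-4E(u_0)\ge c_2|u|^{r+2}_{r+2}$ and no invariance is needed. To recover the statement as given, follow the paper and work with $\mathcal{E}=I$ directly. (Be aware that the paper's own inequality $\tfrac12\tfrac{\diff}{\diff t}\mathcal{E}\le-\|u_t\|^2$ also merits scrutiny, since the pointwise inequality $(c_1-\tfrac{c_2(r+2)}{4}u^r)u\le f(u)u$ is being paired against $u_t$, whose sign is not controlled; but that is the paper's step to defend, not yours.)
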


\begin{proof}
Let $u_0\in H^1_0(\Omega)$ satisfy (\ref{criteria-blow-up}) and assume $u$ is a solution to problem (\ref{eqn})-(\ref{ic}) according to Definition \ref{d:weak-solution}. 
This time we apply (\ref{blow-up}) to (\ref{id-2-f}) to find that there holds, 
\[\begin{aligned}
\langle f(u)u,u \rangle & = \int_\Omega f(u)u^2 \diff x \\ 
& \leq \int_\Omega \left( c_1 - c_2 u^r \right) u^2 \diff x \\ 
& \leq \int_\Omega c_1 u^2 \diff x - \frac{1}{2} \int_\Omega c_2 u^2 u^{r} \diff x - \frac{1}{2} \int_\Omega c_2 u^2 u^{r} \diff x \\ 
& = c_1 \|u\|^2 - \frac{c_2}{2} |u|^{r+2}_{r+2} - \frac{c_2}{2} u^2(\xi(t),t) |u|^r_r.
\end{aligned}\]
With this, instead of (\ref{id-2}), we find that there holds, for almost all $t\in(0,T)$,
\begin{equation}  \label{blow-up-2}
\frac{1}{2}\frac{\diff}{\diff t}\|u\|^2 + \nu\|\nabla u\|^2 + c_1 \|u\|^2 - \frac{c_2}{2} |u|^{r+2}_{r+2} \geq \frac{c_2}{2} u^2(\xi(t),t) |u|^r_r.
\end{equation}
Define the functional $\mathcal{E}:H^1_0(\Omega)\rightarrow\mathbb{R}$, for all $\varphi\in H^1_0(\Omega)\cap L^{r+2}(\Omega)$, by
\[
\mathcal{E}(\varphi) := \nu\|\nabla \varphi\|^2 + c_1\|\varphi\|^2 - \frac{c_2}{2} |\varphi|^{r+2}_{r+2}.
\]
Let $\mathcal{E}(t)$ denote the functional $\mathcal{E}$ along trajectories $\varphi=u(t)$.
We observe that 
\[\begin{aligned}
\frac{1}{2}\frac{\diff\mathcal{E}(t)}{\diff t} & = \langle -\nu\Delta u + \left( c_1 - \frac{c_2}{4}(r+2) u^r \right)u, u_t \rangle \\ 
& \leq -\|u_t\|^2.
\end{aligned}\]
Thus, for all $t\in(0,T)$,
\[
\mathcal{E}(0) \geq \mathcal{E}(t),
\]
and since (\ref{criteria-blow-up}) holds, $\mathcal{E}(0)<0$.
With this, (\ref{blow-up-2}) becomes 
\[
\frac{\diff}{\diff t}\|u\|^2 \geq c_2 u^2(\xi(t),t) |u|^r_r,
\]
whereby with H\"{o}lder's inequality,
\[
|u|^r_r \geq |\Omega|^{(2-r)/2} \|u\|^r,
\]
so we now have, for almost all $t\in(0,T)$, 
\begin{equation}  \label{blow-up-3}
\frac{\diff}{\diff t}\|u\|^2 \geq c_2 |\Omega|^{(2-r)/2} u^2(\xi(t),t) \|u\|^r.
\end{equation}
Integrating (\ref{blow-up-3}) with respect to $t$ over $[0,T]$ yields,
\begin{equation}
\label{time-2}
\|u(t)\|^2 \geq \left( \|u_0\|^{2-r} + \frac{c_2(2-r)}{2} |\Omega|^{(2-r)/2} \int_0^t u^2(\xi(\tau),\tau) \diff\tau \right)^{2/(2-r)}.
\end{equation}
Recall that $r>2$; whence, finite time blow-up occurs when 
\[
\int_0^t u^2(\xi(\tau),\tau) \diff\tau = \frac{2|\Omega|^{(r-2)/2}}{c_2(r-2) \|u_0\|^{r-2}}.
\]
We now appeal to the Mean Value Theorem for Integrals once again; there is $t^*\in (0,t)$ in which 
\[
\int_0^t u^2(\xi(\tau),\tau) \diff\tau = u^2(\xi(t^*),t^*)t.
\]
Thus, the right-hand side of (\ref{time-2}) is singular whenever (note, $t$ is positive),
\[
t = \frac{2|\Omega|^{(r-2)/2}}{c_2(r-2) u^2(\xi(t^*),t^*) \|u_0\|^{r-2}}.
\]
This shows (\ref{time}) as claimed.
\end{proof}

\begin{remark}
The preceding result guarantees the local existence of a solution to problem \eqref{eqn}-\eqref{ic}.
In the proof, $t^*$ depends on $t$, so we cannot claim that we know for which $t$ a finite time blow-up occurs.
Nevertheless, this shows that the ``MVT method'' can be used to show that certain problems cannot possess global solutions.
Below, we will see how the method can be used to show finite time blow-up.
\end{remark}

\section{Application}

In this section we encounter a type of perturbation for the problem considered in \cite{Fujita-66}. The study of the equation,
\[
u_t -\Delta u - u^{\alpha+1} = 0, \quad\alpha>0,
\]
has led to the development of the study of blow-up solutions of PDE.
We will consider, for $x\in(0,1)$ and $t>0$, the asymptotic behavior of positive solutions $u=u(x,t)$ of the semilinear parabolic equation with boundary degeneracy,
\begin{equation}
\label{ap-eqn}
u_t = \left( x^d u_x \right)_x + u^p,
\end{equation}
$d>0$, $p>1$, with the (mixed weighted Neumann and Dirichlet) boundary conditions,
\begin{equation}
\label{ap-bndry}
x^d u_x(0,\cdot)=0, \quad u(1,\cdot)=0,
\end{equation}
and
\begin{equation}
\label{ap-ic}
u(\cdot,0) = u_0(\cdot).
\end{equation}

Of particular interest is the recent work by \cite{Wang13}, which we now report.
First a definition (cf. \cite[Definition 2.1]{Wang13}).

\begin{definition}
\label{def-Wang}
Assume that $0<T\leq +\infty$. A nonnegative function $u$ is called a {\sc{solution}} to problem (\ref{ap-eqn})-(\ref{ap-ic}) if 
\begin{enumerate}
\item[(i)] For any $0<\tilde T<T$, $u\in L^\infty((0,1)\times(0,\tilde T))$ with $u_t\in L^2((0,1)\times(0,\tilde T))$ and $x^{d/2}u_x\in L^2((0,1)\times(0,\tilde T))$.

\item[(ii)] For any $0<\tilde T<T$ and any nonnegative $\varphi\in C^1([0,1]\times[0,\tilde T])$ vanishing at $x=1$,
\[\begin{aligned}
\int_0^{\tilde T} \int_0^1 & \left( u_t(x,t)\varphi(x,t) + x^d u_x(x,t)\varphi(x,t) \right) \diff x \diff t \\
& = \int_0^{\tilde T} \int_0^1 u^p(x,t)\varphi(x,t)\diff x \diff t.
\end{aligned}\]

\item[(iii)] $u(1,\cdot) = 0$ on $(0,T)$ and $u(\cdot,0) = u_0(\cdot)$ on $(0,1)$ (in the sense of trace).
\end{enumerate}
\end{definition}

The results we investigate are (cf. \cite[Theorems 2.1, 2.2]{Wang13}).

\begin{theorem}  \label{thm}
Assume $0< d <2$. Then there exists both nontrivial global and blowing-up solutions to problem (\ref{ap-eqn})-(\ref{ap-ic}).
\end{theorem}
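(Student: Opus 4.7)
The strategy is to fit problem (\ref{ap-eqn})-(\ref{ap-ic}) into the framework of Section \ref{s:2} by taking $\nu=1$, replacing $-\Delta$ by the degenerate operator $Au:=-(x^d u_x)_x$ on $(0,1)$, and setting $f(u)=-u^{p-1}$ so that $f(u)u=-u^p$. The essential new ingredient is a weighted Poincar\'e--Hardy inequality
\[
\int_0^1 u^2\,\diff x \leq \frac{1}{\lambda_1(d)}\int_0^1 x^d u_x^2\,\diff x
\]
on the natural weighted Sobolev space compatible with (\ref{ap-bndry}), with first eigenvalue $\lambda_1(d)>0$ precisely when $0<d<2$ (see \cite{Wang13}). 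Multiplying (\ref{ap-eqn}) by $u$, integrating by parts (the weighted Neumann condition at $x=0$ and Dirichlet at $x=1$ kill the boundary terms), and applying Proposition \ref{MV-lemma} yields the analogue of (\ref{id-2}),
\[
\tfrac{1}{2}\tfrac{\diff}{\diff t}\|u\|^2 + \int_0^1 x^d u_x^2\,\diff x - u^{p-1}(\xi(t),t)\|u\|^2 = 0.
\]

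For the existence of a nontrivial global solution I would argue along the lines of Theorem \ref{bounded}. Choose nontrivial $u_0\in L^\infty(0,1)$ with $\|u_0\|_{L^\infty}^{p-1}<\lambda_1(d)$. Whenever $\|u(t)\|_{L^\infty}^{p-1}<\lambda_1(d)$, the identity above combined with the weighted Poincar\'e inequality gives $\tfrac{\diff}{\diff t}\|u\|^2+2(\lambda_1(d)-u^{p-1}(\xi(t),t))\|u\|^2\leq 0$, hence exponential $L^2$-decay. A bootstrap using the comparison principle for the linear degenerate parabolic operator (available for $0<d<2$) upgrades this $L^2$-decay to a preserved $L^\infty$ bound, so the smallness condition propagates for all $t>0$ and a global decaying solution is obtained.

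For the blow-up part I would apply Theorem \ref{blow-up-result} with $c_1=0$, $c_2=1$ and $r=p-1$ (the sandwich (\ref{blow-up}) is an equality on the upper side and holds on the lower side for $p\geq 3$; in the remaining range $1<p<3$ the same energy-functional argument runs directly with adjusted constants). The criterion (\ref{criteria-blow-up}) then becomes the negative-energy condition
\[
\int_0^1 x^d u_{0,x}^2\,\diff x < \tfrac{1}{2}|u_0|^{p+1}_{p+1},
\]
which is easily realized by scaling: take $u_0=\mu\phi$ for a fixed admissible positive profile $\phi$ vanishing at $x=1$ and $\mu>0$ sufficiently large, since the right-hand side grows like $\mu^{p+1}$ while the left grows like $\mu^2$. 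The finite-time blow-up estimate (\ref{time}) then transfers verbatim, yielding the desired blowing-up solution.

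The main obstacle is the weighted Hardy/Poincar\'e inequality and the accompanying spectral theory of the degenerate operator $A$, which is exactly where the hypothesis $0<d<2$ is consumed. A subtler difficulty appears in the global-existence bootstrap: because the point $\xi(t)$ of Proposition \ref{MV-lemma} is not explicitly identified, the pointwise value $u(\xi(t),t)$ cannot be controlled purely from energy norms, so independent control of $\|u(t)\|_{L^\infty}$ via a comparison principle for the degenerate operator is required to close the argument.
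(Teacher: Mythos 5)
First, note that the paper itself offers no proof of this statement: Theorem \ref{thm} is imported verbatim from \cite[Theorem 2.1]{Wang13}, and the paper's own contribution in its direction is the pair Theorem \ref{ap-result} (an explicit decay criterion \eqref{Wang-cond}, obtained under the \emph{modified} boundary conditions \eqref{ap-bndry2} so that the ordinary Poincar\'{e} constant $\pi^2/4$ can be used together with the mean value identity $\int_0^1 x^d u_x^2\,\diff x=\xi^d(t)\|u_x\|^2$) and Theorem \ref{ap-result2} (an explicit blow-up criterion). Your route is therefore genuinely different from the paper's: you invoke the weighted Hardy--Poincar\'{e} inequality $\|u\|^2\leq \lambda_1(d)^{-1}\int_0^1 x^d u_x^2\,\diff x$ directly, and you correctly identify $0<d<2$ as exactly the range in which $\lambda_1(d)>0$ with a bounded first eigenfunction. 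Your blow-up half is essentially sound: it is the standard negative-energy argument, the scaling $u_0=\mu\phi$ with $\mu$ large does realize the negative-energy condition, and the paper's Theorem \ref{ap-result2} carries out the same computation (note, though, that its criterion \eqref{Wang-cond2} carries the constant $\tfrac{1}{p+1}$, not $\tfrac12$, and that routing the argument through Theorem \ref{blow-up-result} with $r=p-1$ collides with the hypothesis $r>2$, excluding all $1<p\leq 3$; you should run the energy functional directly for every $p>1$ rather than appeal to ``adjusted constants'').

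The genuine gap is in the global-existence half. A comparison principle for the \emph{linear} degenerate operator cannot upgrade $L^2$-decay to a preserved $L^\infty$ bound, because $u$ satisfies $u_t-(x^du_x)_x=u^p\geq 0$ and is therefore a \emph{super}solution of the linear equation; comparison with the linear flow bounds $u$ from below, not from above. As written, your argument is circular: the differential inequality $\tfrac{\diff}{\diff t}\|u\|^2+2(\lambda_1(d)-u^{p-1}(\xi(t),t))\|u\|^2\leq 0$ is only useful while the $L^\infty$ smallness holds, and the $L^\infty$ smallness is precisely what the bootstrap is supposed to propagate. The standard repair is to compare $u$ with an explicit supersolution of the \emph{nonlinear} problem, e.g.\ $\bar u=\varepsilon\psi_1$ with $\psi_1>0$ the first eigenfunction of $-(x^d(\cdot)_x)_x$ and $\varepsilon^{p-1}\|\psi_1\|_\infty^{p-1}\leq\lambda_1(d)$, so that $\bar u_t-(x^d\bar u_x)_x-\bar u^p\geq 0$ and $u_0\leq\varepsilon\psi_1$ propagates to $u(t)\leq\varepsilon\psi_1$ for all $t>0$; this is where the hypothesis $0<d<2$ is actually consumed, since it guarantees $\lambda_1(d)>0$ and $\psi_1\in L^\infty$. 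You flag this difficulty in your closing paragraph, but flagging it is not the same as resolving it, and until it is resolved the global-existence half of the theorem is not proved.
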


\begin{theorem}  \label{blow-up-thm}
Assume $d \ge 2$.
The solution to problem (\ref{ap-eqn})-(\ref{ap-ic}) must blow up in a finite time for any nontrivial $0 \le u_0 \in L^\infty(0,1)$ with $x^{d/2}\partial_x u_0 \in L^2(0,1)$.
\end{theorem}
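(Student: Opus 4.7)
The plan is to transplant the blow-up scheme from Theorem \ref{blow-up-result} to the weighted degenerate operator. First I multiply (\ref{ap-eqn}) by $u$ in $L^2(0,1)$; since $x^d u_x(0,\cdot)=0$ and $u(1,\cdot)=0$, integration by parts produces
\[
\frac{1}{2}\frac{\diff}{\diff t}\|u\|^2 + \int_0^1 x^d u_x^2 \diff x = \int_0^1 u^{p+1}\diff x,
\]
and Proposition \ref{MV-lemma} (which adapts verbatim to $(0,1)$) rewrites the right-hand side as $u^{p-1}(\xi(t),t)\|u\|^2$ for a continuous $\xi(t)\in[0,1]$. This is the direct analogue of (\ref{id-2-f}) with ``potential'' $f(s)=-s^{p-1}$.

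I would then introduce the Lyapunov functional
\[
\mathcal{E}(v) := \frac{1}{2}\int_0^1 x^d v_x^2 \diff x - \frac{1}{p+1}\int_0^1 v^{p+1}\diff x,
\]
and, by testing (\ref{ap-eqn}) against $u_t$, verify that $\frac{\diff}{\diff t}\mathcal{E}(u(t)) = -\|u_t\|^2 \le 0$, so $\mathcal{E}(u(t))\le \mathcal{E}(u_0)$ for as long as $u$ exists. Substituting $\int_0^1 x^d u_x^2 \diff x = 2\mathcal{E}(u)+\frac{2}{p+1}\int_0^1 u^{p+1}\diff x$ back into the preceding identity, and using the H\"{o}lder bound $\int_0^1 u^{p+1}\diff x \ge \|u\|^{p+1}$ (valid because $|\Omega|=1$), yields
\[
\frac{1}{2}\frac{\diff}{\diff t}\|u\|^2 \ge -2\mathcal{E}(u_0) + \frac{p-1}{p+1}\|u\|^{p+1}.
\]
As soon as $\mathcal{E}(u(t_1))\le 0$ at some $t_1\ge 0$, the function $y(t):=\|u(t)\|^2$ satisfies the super-linear Bernoulli-type inequality $y'\ge \frac{2(p-1)}{p+1}y^{(p+1)/2}$, which integrates to finite-time blow-up because $p>1$.

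The hard part, and where the hypothesis $d\ge 2$ is essential, is arranging $\mathcal{E}(u(t_1))\le 0$ at some finite $t_1$ for \emph{every} nontrivial $u_0\ge 0$, not just for ``large'' data. For $0<d<2$ a weighted Hardy inequality provides a positive first eigenvalue of $-\partial_x(x^d\partial_x\cdot\,)$ and small data can survive globally (compare Theorem \ref{thm}); but for $d\ge 2$ one has
\[
\inf_{v\not\equiv 0,\,v(1)=0} \frac{\int_0^1 x^d v_x^2 \diff x}{\int_0^1 v^2 \diff x} = 0,
\]
so the degenerate diffusion can no longer coerce the $L^2$ norm. Combining a weak comparison principle (which preserves $u\ge 0$ along the flow) with the mean-value identity from the first paragraph, I would argue that the source $u^p$ is forced to build up $\int_0^1 u^{p+1}\diff x$ until it dominates $\frac{p+1}{2}\int_0^1 x^d u_x^2 \diff x$, driving $\mathcal{E}(u(t_1))$ strictly below zero at some finite $t_1$; the Bernoulli estimate of the previous paragraph then finishes the proof. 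Making this ``eventual negative-energy'' step quantitative using only the mean-value machinery of Section \ref{s:2} is the technical core of the argument.
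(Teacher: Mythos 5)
First, note that the paper does not actually prove Theorem \ref{blow-up-thm}: it is quoted from \cite{Wang13} as background, and the paper's own contribution in this direction is the \emph{conditional} result Theorem \ref{ap-result2}, which establishes blow-up for any $d>0$, $p>1$ only under the extra sign hypothesis \eqref{Wang-cond2}, i.e.\ $E(0)=\langle x^d,(\partial_x u_0)^2\rangle-\frac{1}{p+1}|u_0|^{p+1}_{p+1}<0$. The first two paragraphs of your proposal are sound and essentially coincide with that proof: the energy identity from testing with $u$, the Lyapunov functional $\mathcal{E}$ with $\frac{\diff}{\diff t}\mathcal{E}(u(t))=-\|u_t\|^2$, the substitution of $\int_0^1 x^d u_x^2\,\diff x$ in terms of $\mathcal{E}$, the Jensen/H\"older bound $|u|_{p+1}^{p+1}\ge\|u\|^{p+1}$, and the resulting superlinear differential inequality for $y=\|u\|^2$ once the energy is nonpositive. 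Up to normalization this is the paper's argument for Theorem \ref{ap-result2}.

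The genuine gap is the final step, which you yourself flag as ``the technical core'': showing that for $d\ge 2$ the energy $\mathcal{E}(u(t))$ must become nonpositive in finite time for \emph{every} nontrivial $0\le u_0$, with no smallness or sign condition. This is precisely the content that separates Theorem \ref{blow-up-thm} from the conditional Theorem \ref{ap-result2}, and your proposal offers only a heuristic for it. The heuristic is also not conclusive as stated: even granting that the weighted Poincar\'e constant $\inf\int_0^1 x^d v_x^2\,\diff x/\int_0^1 v^2\,\diff x$ degenerates to zero when $d\ge2$ (which itself needs a proof --- the natural test functions $v_\alpha(x)=x^{-\alpha}-1$ do not obviously drive the Rayleigh quotient to $0$), lack of coercivity of the diffusion does not by itself force $\mathcal{E}$ below zero; one must still exclude, for instance, convergence to a nontrivial nonnegative steady state or indefinitely slow growth with $\mathcal{E}(u(t))>0$ for all $t$. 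The standard route to the unconditional statement is different in kind: a Kaplan-type eigenfunction (or weighted test function adapted to the degeneracy at $x=0$) applied to the equation itself, yielding an ODE for a weighted mean of $u$ that blows up for all nontrivial nonnegative data when $d\ge2$; this is the device used in \cite{Wang13}, and it is not reproduced by the mean-value machinery of Section \ref{s:2}. As written, your argument proves the paper's Theorem \ref{ap-result2} but not the stated Theorem \ref{blow-up-thm}.
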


The purpose of this application is to provide a better description to the states $u_0$ for which the corresponding solution converges exponentially to zero, or leads to finite time blow-up. 
This is done in terms of the structural parameters $d$ and $p$. 
Our idea employs a similar argument that produces the results in the previous section; i.e., we find an {\em{a priori}} estimate which exploits a generalization of the Mean Value Theorem for Integrals to derive a condition that guarantees solutions $u$ (to a comparable problem) exponentially converge to zero.
Because of the nature of the equation examined here, we are able to explicitly determine a finite time blow-up result as well.

It was crucial to the development of the results in section \ref{s:2} that we explicitly know the value of the first eigenvalue of the Laplacian with respect the the imposed boundary conditions, we now introduce the comparable problem which we will use to illustrate Theorem \ref{thm}.
Observe that any solution to \eqref{ap-eqn} satisfying the homogeneous mixed Neumann--Dirichlet boundary condition
\begin{equation}
\label{ap-bndry2}
u_x(0,\cdot)=0, \quad \text{and} \quad u(1,\cdot)=0,
\end{equation}
also satisfies \eqref{ap-bndry} when $d>0$.
Then recall that the Laplacian on $(0,1)$ subject to the Neumann--Dirichlet boundary conditions \eqref{ap-bndry2} admits the following system of eigenvalues and eigenvectors, \footnote{https://en.wikipedia.org/wiki/Eigenvalues\_and\_eigenvectors\_of\_the\_second\_derivative\#Mixed\_Neumann-Dirichlet\_boundary\_conditions}
\[
\lambda_k=\frac{(2k-1)^2\pi^2}{4}, \quad \phi_k(x)=\sqrt{2}\cos\left( \frac{(2k-1)\pi x}{2} \right), \quad k=1,2,3,\dots.
\]
Whence, on $(0,1)$, the Poincar\'{e} constant (first eigenvalue of the Laplacian subject to \eqref{ap-bndry2}) is $\lambda_1=\frac{\pi^2}{4}$. 

Our first result in response to Theorem \ref{thm} is the following

\begin{theorem}
\label{ap-result}
Let $T>0$, $0 < d < 2$ and $p>1$. Suppose $u=u(x,t)$ is a positive solution to problem (\ref{ap-eqn}), (\ref{ap-ic}), and \eqref{ap-bndry2}. 
There are $\xi,\chi\in C([0,T];(0,1))$ in which 
\begin{equation}  \label{ap-xi}
\int_0^1 x^d u_x^2(x,t) \diff x = \xi^d(t) \int_0^1 u_x^2(x,t) \diff x,
\end{equation}
and
\begin{equation}
\label{ap-chi}
\int_0^1 u^{p+1}(x,t) \diff x = u^{p-1}(\chi(t),t)\int_0^1 u^{2}(x,t) \diff x.
\end{equation}
Furthermore, for any $u_0\in H^1(\Omega)$, $0<d<2$ and $p>1$ satisfying the condition,
\begin{equation}
\label{Wang-cond}
\frac{|u_0|^{p+1}_{p+1}}{\|u_0\|^2} < \frac{\pi^2}{4}\frac{\langle x^d,(\partial_x u_0)^2 \rangle}{\|\partial_x u_0\|^2},
\end{equation}
then the solution of problem (\ref{ap-eqn}), (\ref{ap-ic}), and \eqref{ap-bndry2} exponentially decays to zero for all $t\in(0,T)$.
\end{theorem}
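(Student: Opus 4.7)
The plan is to follow the pattern of Theorem \ref{general-result}, now adapted to the degenerate diffusion operator $\partial_x(x^d\partial_x)$ on $(0,1)$.

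First I would establish the two mean-value identities. For \eqref{ap-xi}, I apply the weighted Mean Value Theorem for Integrals (the same generalization invoked in Proposition \ref{MV-lemma}) to the continuous, strictly increasing map $x\mapsto x^d$ with nonnegative weight $u_x^2(x,t)$; this produces $\xi(t)\in(0,1)$, and its continuity in $t$ follows from joint continuity of $\langle x^d,u_x^2\rangle$ and $\|u_x\|^2$ together with the monotonicity of $x^d$ (which makes $\xi(t)$ uniquely determined by the identity). For \eqref{ap-chi}, I apply the same generalized MVT to the continuous function $x\mapsto u^{p-1}(x,t)$ (well-defined since $u>0$) against the weight $u^2(x,t)$, obtaining $\chi(t)\in(0,1)$.

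Next I would derive the basic energy identity by testing \eqref{ap-eqn} against $u$ in $L^2(0,1)$ and integrating by parts in $x$; the boundary term $[x^d u_x u]_0^1$ vanishes by virtue of \eqref{ap-bndry2} (which is stronger than \eqref{ap-bndry} when $d>0$), yielding
\begin{equation*}
\frac{1}{2}\frac{\diff}{\diff t}\|u\|^2 + \langle x^d,u_x^2\rangle = |u|^{p+1}_{p+1}.
\end{equation*}
Substituting \eqref{ap-xi} and \eqref{ap-chi} and then invoking the Poincar\'e inequality $\|u_x\|^2\geq(\pi^2/4)\|u\|^2$ in the (strictly positive) coefficient $\xi^d(t)$, I arrive at the differential inequality
\begin{equation*}
\frac{1}{2}\frac{\diff}{\diff t}\|u\|^2 \leq \Bigl(u^{p-1}(\chi(t),t) - \tfrac{\pi^2}{4}\xi^d(t)\Bigr)\|u\|^2.
\end{equation*}
Evaluated at $t=0$, the identities give $u_0^{p-1}(\chi(0))=|u_0|^{p+1}_{p+1}/\|u_0\|^2$ and $\xi^d(0)=\langle x^d,(\partial_x u_0)^2\rangle/\|\partial_x u_0\|^2$, so \eqref{Wang-cond} says precisely that the bracketed coefficient is strictly negative at $t=0$.

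The main obstacle is propagating this negativity from $t=0$ to all $t\in(0,T)$. My plan is a bootstrap / open--closed argument on the maximal sub-interval $[0,t^*)$ on which the bracketed coefficient remains negative; on that interval the differential inequality forces $\|u(t)\|$ to strictly decrease. To close the bootstrap I would couple this with the monotonicity of the natural energy
\begin{equation*}
\mathcal{E}(t) := \tfrac{1}{2}\langle x^d,u_x^2\rangle - \tfrac{1}{p+1}|u|^{p+1}_{p+1},
\end{equation*}
which satisfies $\frac{\diff}{\diff t}\mathcal{E}(t)=-\|u_t\|^2\leq 0$ (by testing \eqref{ap-eqn} against $u_t$ and integrating by parts, exactly as in the proof of Theorem \ref{blow-up-result}), with the aim of showing that the ratio inequality $|u(t)|^{p+1}_{p+1}/\|u(t)\|^2 < (\pi^2/4)\langle x^d,u_x^2\rangle/\|u_x\|^2$ persists beyond $t^*$ so that necessarily $t^*=T$. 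Once the bracketed coefficient is uniformly bounded above by a strictly negative constant on $[0,T]$, a routine Gronwall argument delivers the claimed exponential decay of $\|u(t)\|$ in $L^2(0,1)$.
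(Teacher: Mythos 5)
Your derivation up to the differential inequality
\begin{equation*}
\frac{1}{2}\frac{\diff}{\diff t}\|u\|^2 \leq \Bigl(u^{p-1}(\chi(t),t) - \tfrac{\pi^2}{4}\xi^d(t)\Bigr)\|u\|^2
\end{equation*}
is exactly the paper's route: the two mean-value identities \eqref{ap-xi} and \eqref{ap-chi}, the energy identity from testing against $u$, and the Poincar\'e constant $\pi^2/4$ for the mixed Neumann--Dirichlet problem all appear in the same way, and your observation that \eqref{Wang-cond} is precisely the statement that the bracketed coefficient is negative at $t=0$ matches the paper's reading of \eqref{ap-cond}. Your remark that $\xi(t)$ is uniquely determined (hence continuous) because $x\mapsto x^d$ is strictly monotone is in fact a cleaner justification of continuity than the paper's general argument in Theorem \ref{trick}.

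The genuine gap is the last step: you correctly identify that the whole difficulty is propagating the strict inequality $u^{p-1}(\chi(t),t) < \frac{\pi^2}{4}\xi^d(t)$ from $t=0$ to all of $(0,T)$, but you do not actually close the open--closed argument --- you only state ``the aim of showing'' that the ratio inequality persists past $t^*$. The monotonicity of $\mathcal{E}(t) = \frac{1}{2}\langle x^d,u_x^2\rangle - \frac{1}{p+1}|u|^{p+1}_{p+1}$ controls a fixed linear combination of $\langle x^d,u_x^2\rangle$ and $|u|^{p+1}_{p+1}$, whereas the condition you must preserve compares the \emph{ratios} $|u|^{p+1}_{p+1}/\|u\|^2$ and $\langle x^d,u_x^2\rangle/\|u_x\|^2$; at a putative first time $t^*$ where equality holds you would need a quantitative contradiction, and none is supplied (nor is it clear that $\mathcal{E}$ alone can furnish one, since $\|u\|^2$ and $\|u_x\|^2$ can evolve independently of $\mathcal{E}$). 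To be fair, the paper's own proof disposes of this step with a one-line appeal to ``monotonicity'' of $s\mapsto -s^{p-1}$, which is no more of a proof than your plan; so your proposal is faithful to the paper's argument and more candid about where the weak point lies, but as written the propagation step remains unproved.
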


\begin{proof}
Suppose $u$ is a solution to problem (\ref{ap-eqn}), (\ref{ap-ic}), and \eqref{ap-bndry2} in the sense of Definition \ref{def-Wang}. 
The existence of the functions $\xi,\chi\in C([0,T];(0,1))$ in (\ref{ap-xi}) and (\ref{ap-chi}) again follows from the Mean Value Theorem for Integrals (see Theorem \ref{trick}). It remains to show the condition which guarantees the solution's exponential decay to zero. 
As usual, we begin by multiplying (\ref{ap-eqn}) by $u=u(x,t)$ in $L^2(0,1)$ to obtain the identity,
\begin{equation}
\label{ap-id-1}
\frac{1}{2}\frac{\diff}{\diff t}\|u\|^2 = \left\langle \partial_x\left( x^d u_x \right), u \right\rangle + \left\langle u^{p+1},1 \right\rangle.
\end{equation}
Applying the Mean Value Theorem for Integrals, we obtain, 
\[\begin{aligned}
\left\langle \partial_x\left( x^d u_x \right), u \right\rangle & = \int_0^1 \partial_x\left( x^d u_x(x,t) \right) u \diff x = -\int_0^1 x^d u_x^2(x,t) \diff x \\ 
& = -\xi^d(t) \int_0^1 u_x^2(x,t) \diff x = -\xi^d(t) \|u_x\|^2,
\end{aligned}\]
for some $\xi(t)\in C([0,T],(0,1))$.  For some $\chi(t)\in C([0,T],(0,1))$, we also find
\[\begin{aligned}
\left\langle u^{p+1}, 1 \right\rangle & = \int_0^1 u^{p+1}(x,t) \diff x = \int_0^1 u^{p-1}(x,t) u^2(x,t) \diff x \\ 
& = u^{p-1}(\chi(t),t) \int_0^1 u^2(x,t) \diff x = u^{p-1}(\chi(t),t)\|u\|^2.
\end{aligned}\]
So (\ref{ap-id-1}) becomes 
\begin{equation}
\label{ap-id-2}
\frac{\diff}{\diff t}\|u\|^2 + 2\xi^d(t) \|u_x\|^2 = 2u^{p-1}(\chi(t),t)\|u\|^2.
\end{equation}

Recall $\lambda_1=\frac{\pi^2}{4}$. 
Hence, from (\ref{id-2}) we arrive at the differential inequality
\begin{equation}
\label{ap-ineq-1}
\frac{\diff}{\diff t}\|u\|^2 + 2\left( \frac{\pi^2}{4}\xi^d(t) - u^{p-1}(\chi(t),t) \right)\|u\|^2 \leq 0.
\end{equation}
Thus, integrating (\ref{ap-ineq-1}) with respect to $t$ on $[0,T]$ yields,
\begin{equation}
\label{ap-ineq-2}
\|u(t)\| \leq \exp \left[ - \int_0^t \left( \frac{\pi^2}{4}\xi^d(\tau)  - u^{p-1}(\chi(\tau),\tau) \right) \diff \tau \right] \|u_0\|,
\end{equation}
Hence, exponential decay to zero is guaranteed when, 
\[
u^{p-1}(\chi(t),t) < \frac{\pi^2}{4}\xi^d(t), \quad \forall t\in(0,T),
\]
and by monotonicity (in this case the decreasing function is $f(s)= -s^{p-1}$ for $s>0$ and $p>1$), 
\begin{equation}
\label{ap-cond}
u^{p-1}_0(\chi(0)) < \frac{\pi^2}{4}\xi^d(0).
\end{equation}
Thus, in terms of the initial data $u_0$, thanks to (\ref{ap-xi}) and (\ref{ap-chi}), the condition (\ref{ap-cond}) becomes (\ref{Wang-cond}) as claimed.
This finishes the proof.
\end{proof}

\begin{remark}
The conditions that guarantee finite time blow-up when $0<d<2$ are not as clear. However, there is the partial result. After applying the Cauchy-Schwarz and Poincar\'{e} inequalities to the condition (\ref{Wang-cond}) we see that, when 
\[
\frac{|\partial_x u_0|^2_4}{|u_0|^{p+1}_{p+1}} \leq \frac{\pi^2}{4},
\]
then blow-up in finite time is {\em{possible}} for any $0<d<2$ and $p>1$.
\end{remark}

We are now in position to provide a better description for the solutions that converge exponentially to zero. 
Using (\ref{Wang-cond}), we are motivated to define the set on which solutions to problem (\ref{ap-eqn}), (\ref{ap-ic}), and \eqref{ap-bndry2} are guaranteed to exponentially decay to zero,
\[
\mathcal{E} = \left\{ u_0\in H^1_0(\Omega) : u_0\not\equiv 0, \frac{\|u_0\|^2 |\partial_x u_0|^2_4}{|u_0|^{p+1}_{p+1} \|\partial_x u_0\|^2} > \frac{4}{\pi^2}\sqrt{1+2d} \right\}.
\]

It is possible to further illustrate this application with an inverse problem. Given $u_0(x)$, one computes $|\partial_x u_0|^2_4$, $\|u_0\|^2$, $|u_0|^{p+1}_{p+1}$, and $\|\partial_x u_0\|^2$. 
Since $0<d<2$, the set of $p>1$ which guarantee the corresponding solution $u$ decays exponentially to zero can be implicitly determined from the inequality
\[
\frac{\sqrt{1+2d} |u_0|^{p+1}_{p+1} \|\partial_x u_0\|^2} {\|u_0\|^2 |\partial_x u_0|^2_4} < \frac{4}{\pi^2} \approx 0.405285.
\]
Now the left hand side also defines a function of the two variables $(d,p)$. Hence, when we are given $u_0(x)$, the region containing the value of $(d,p)$ which guarantees the solution converges exponentially to zero can be found on the contour map of the surface described by that function. We illustrate this with three different initial conditions, $u_0(x)=x\sin(\pi x)$ and $u_0(x)=e^{1-x^2}-1$ (see Figures \ref{fig1} and \ref{fig2} below).

\begin{figure}[htbp]
\centering
\includegraphics[scale=0.5]{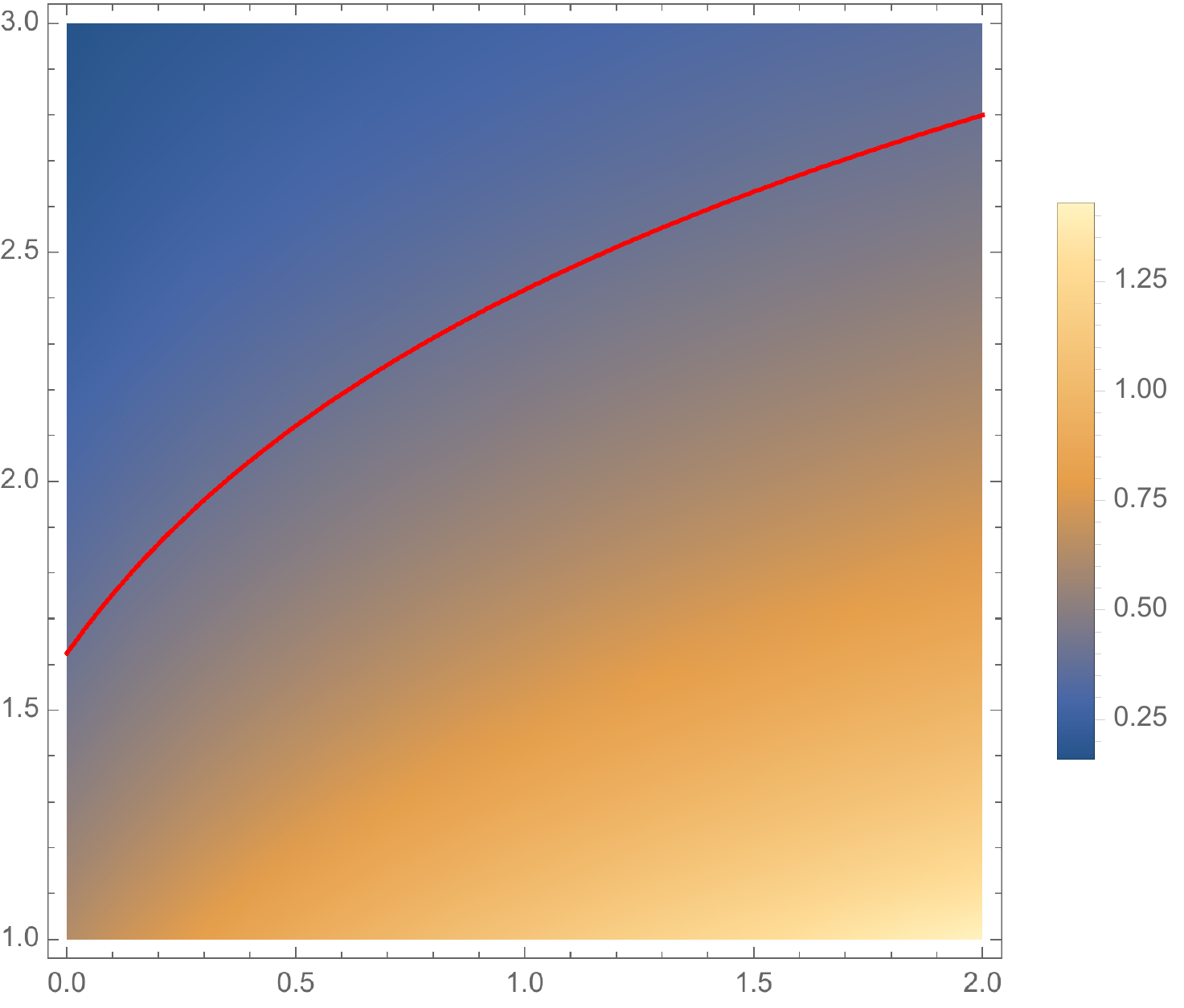}
\caption{Given $u_0(x)=x\sin(\pi x)$. Plot of the parameter space: $d\in[0,2]$, $p\in[1,3]$. Parameters $(d,p)$ above the critical contour (the $4/\pi^2$ level curve shown in red) guarantee the solution exponentially decays to zero.}
\label{fig1}
\end{figure}

\begin{figure}[htpb]
\centering
\includegraphics[scale=0.5]{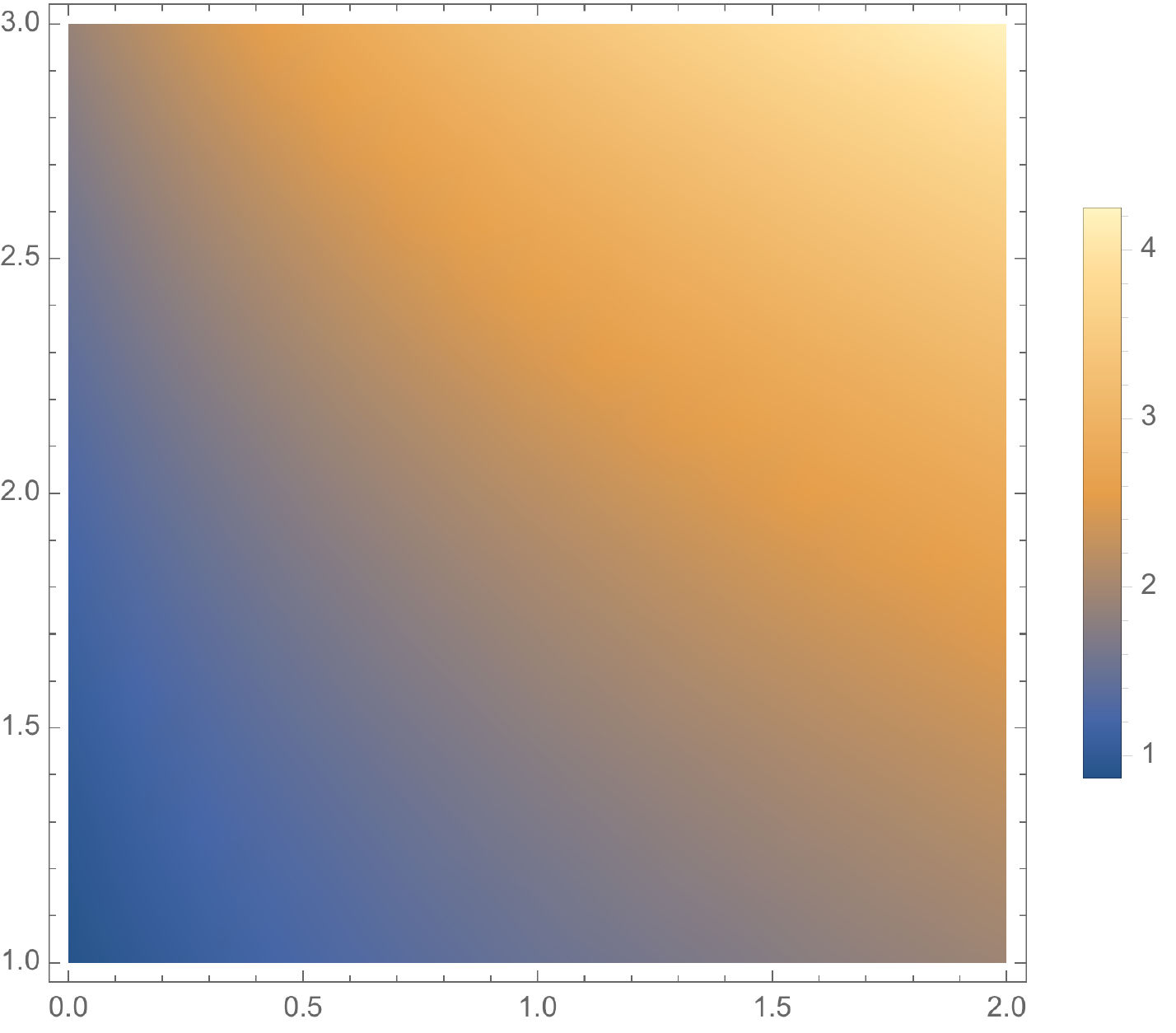}
\caption{Given $u_0(x)=e^{1-x^2}-1$. Plot of the parameter space: $d\in[0,2]$, $p\in[1,3]$. All parameters $(d,p)$ in this space guarantee the solution exponentially decays to zero.}
\label{fig2}
\end{figure}

Finally, our last result is in response to Theorem \ref{blow-up-thm}.
In this case we consider the original problem \eqref{ap-eqn}-\eqref{ap-ic} described in the application (no knowledge of the best Poincar\'{e} constant is needed here).

\begin{theorem}  \label{ap-result2}
Let $T>0$, $d>0$ and $p>1$. Suppose $u=u(x,t)$ is a positive solution to problem (\ref{ap-eqn})-(\ref{ap-ic}). 
There is $\xi\in C([0,T];(0,1))$ in which 
\begin{equation}
\label{ap-xi2}
\int_0^1 x^d u_x^2(x,t) \diff x = \xi^d(t) \int_0^1 u_x^2(x,t) \diff x,
\end{equation}
Furthermore, for any $u_0\in H^1(\Omega)$, $d>0$ and $p>1$ satisfying the condition,
\begin{equation}  \label{Wang-cond2}
\langle x^d,(\partial_x u_0)^2 \rangle < \frac{1}{p+1} |u_0|^{p+1}_{p+1},
\end{equation}
then the corresponding positive solutions $u$ to problem (\ref{ap-eqn})-(\ref{ap-ic}) blow-up at the finite time
\begin{equation}  \label{time2}
t' = \frac{p+1}{2p} \frac{1}{\|u_0\|^\frac{p-1}{2}},
\end{equation}
that is, $\lim_{t\rightarrow t'^-} \|u(t)\| = +\infty$.
\end{theorem}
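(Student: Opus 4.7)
My plan is to combine the evolution identity satisfied by $\|u(t)\|^2$ with the monotonicity of a natural Lyapunov energy, so as to transform \eqref{Wang-cond2} into a superlinear differential inequality that forces finite-time blow-up. The representation \eqref{ap-xi2} is obtained exactly as in Proposition \ref{MV-lemma} by applying the generalized Mean Value Theorem for Integrals (Theorem \ref{trick}) to $\int_0^1 x^d u_x^2(x,t)\diff x$, using the fact that $x\mapsto x^d$ is positive on $(0,1)$.

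First I would multiply \eqref{ap-eqn} by $u$ in $L^2(0,1)$; the boundary term $x^d u_x u\big|_0^1$ vanishes thanks to \eqref{ap-bndry}, which yields the identity
\begin{equation*}
\frac{1}{2}\frac{\diff}{\diff t}\|u\|^2 + \langle x^d, u_x^2 \rangle = |u|_{p+1}^{p+1}.
\end{equation*}
In parallel I would introduce the energy functional
\begin{equation*}
\mathcal{F}(u) := \frac{1}{2}\langle x^d, u_x^2 \rangle - \frac{1}{p+1}|u|_{p+1}^{p+1},
\end{equation*}
following the scheme used in Theorem \ref{blow-up-result}, and obtain $\frac{\diff}{\diff t}\mathcal{F}(u(t)) = -\|u_t\|^2 \leq 0$ by (formally) pairing \eqref{ap-eqn} with $u_t$. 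Hence $\mathcal{F}(t) \leq \mathcal{F}(0)$ on $[0,T]$, and \eqref{Wang-cond2} is precisely what guarantees $\mathcal{F}(0) \leq -\frac{1}{2(p+1)}|u_0|_{p+1}^{p+1} < 0$.

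Rearranging $\mathcal{F}(t)<0$ gives $\langle x^d, u_x^2 \rangle < \frac{2}{p+1}|u|_{p+1}^{p+1}$, and substituting this into the $L^2$ energy identity produces the differential inequality
\begin{equation*}
\frac{\diff}{\diff t}\|u\|^2 \geq \frac{2(p-1)}{p+1}|u|_{p+1}^{p+1}.
\end{equation*}
Since $(0,1)$ has unit measure, H\"{o}lder's inequality yields $|u|_{p+1}^{p+1} \geq \|u\|^{p+1}$. Setting $y(t) = \|u(t)\|^2$ then reduces the problem to a Bernoulli-type inequality $\dot y \geq C\, y^{(p+1)/2}$, whose separation-of-variables integration between $0$ and $t$ supplies an upper bound on $y(t)^{-(p-1)/2}$ that decreases linearly in $t$; the value of $t$ at which this bound reaches $0$ gives a finite blow-up time of the functional form $\|u_0\|^{-\gamma}$ claimed in \eqref{time2}, and the precise constant is read off from the integration.

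I expect the main obstacle to be the rigorous justification of the Lyapunov identity $\frac{\diff}{\diff t}\mathcal{F}(u(t)) = -\|u_t\|^2$, since pairing $\partial_x(x^d u_x)$ with $u_t$ requires enough regularity to differentiate $\int_0^1 x^d u_x^2 \diff x$ in time and to integrate by parts in the weighted space $L^2((0,1), x^d\diff x)$, which is delicate at the weak-solution level supplied by Definition \ref{def-Wang}. As elsewhere in Section \ref{s:2}, I would treat this as an \emph{a priori} calculation, to be recovered rigorously by a Galerkin approximation or by smoothing of $u_0$. Matching the precise constant $(p+1)/(2p)$ in \eqref{time2} is a matter of carefully tracking factors through the final integration and, if needed, applying the Mean Value Theorem for Integrals once more to a running integral such as $\int_0^t u^2(\chi(\tau),\tau)\diff\tau$, exactly in the spirit of the closing argument of Theorem \ref{blow-up-result}.
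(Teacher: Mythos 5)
Your route is the same as the paper's: test with $u$ to get the $L^2$ identity, test with $u_t$ to get a nonincreasing energy, use \eqref{Wang-cond2} to force the initial energy negative, then close with H\"older on the unit interval and an ODE comparison. One point in your favor: your Lyapunov functional $\mathcal{F}(u)=\tfrac12\langle x^d,u_x^2\rangle-\tfrac{1}{p+1}|u|_{p+1}^{p+1}$ carries the correct factor $\tfrac12$ on the gradient term (pairing $-\partial_x(x^d u_x)$ with $u_t$ yields $\tfrac12\tfrac{\diff}{\diff t}\int_0^1 x^d u_x^2\,\diff x$), whereas the paper's functional $E$ omits it; your check that \eqref{Wang-cond2} still gives $\mathcal{F}(0)<0$ is correct, and the resulting inequality $\tfrac{\diff}{\diff t}\|u\|^2\ge \tfrac{2(p-1)}{p+1}|u|_{p+1}^{p+1}$ is right.

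The genuine gap is the final step, which you defer with ``the precise constant is read off from the integration'': if you actually carry it out, it does not produce \eqref{time2}. From $\dot y\ge \tfrac{2(p-1)}{p+1}\,y^{(p+1)/2}$ with $y=\|u\|^2$, separation of variables gives $y(t)^{-(p-1)/2}\le \|u_0\|^{-(p-1)}-\tfrac{(p-1)^2}{p+1}\,t$, hence an upper bound $t'=\tfrac{p+1}{(p-1)^2}\,\|u_0\|^{-(p-1)}$ for the maximal existence time (strictly speaking the comparison only shows blow-up occurs no later than $t'$, not exactly at $t'$). With the paper's constants one gets $\tfrac{p+1}{p(p-1)}\,\|u_0\|^{-(p-1)}$ instead. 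In either case the data enters as $\|u_0\|^{-(p-1)}$, not the $\|u_0\|^{-(p-1)/2}$ of \eqref{time2}, and neither prefactor is $\tfrac{p+1}{2p}$. So your argument does prove finite-time blow-up under \eqref{Wang-cond2}, which is the substantive content, but it does not verify the particular formula \eqref{time2}; the paper's own proof is equally elliptical at this point (``we readily find the blow-up time'') and its displayed formula appears to be in error. You should perform the integration explicitly and state the bound you actually obtain rather than asserting agreement with \eqref{time2}.
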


\begin{proof}
The assertion in \eqref{ap-xi2} was already shown in the proof of the previous theorem.

We proceed as in the proof of Theorem \ref{blow-up-result}.
Multiplying \eqref{ap-eqn} by $u$ in $L^2(0,1)$ and applying \eqref{ap-xi} we obtain,
\begin{equation}  \label{ap-poi-0}
\frac{1}{2}\frac{\diff}{\diff t}\|u\|^2 + \langle -\partial_x(x^du_x),u \rangle - |u|^{p+1}_{p+1} = 0.
\end{equation}
A further multiplication of \eqref{ap-eqn} by $u_t$ produces,
\[
\frac{\diff}{\diff t} \left\{ \langle -\partial_x(x^du_x),u \rangle - \frac{1}{p+1}|u|^{p+1}_{p+1} \right\} + \|u_t\|^2 = 0.
\]
We define the functional 
\begin{equation}  \label{ap-poi-1}
E(t):=\langle -\partial_x(x^du_x(t)),u(t) \rangle - \frac{1}{p+1}|u(t)|^{p+1}_{p+1},
\end{equation}
and thanks to \eqref{ap-poi-1}, we immediately know that
\[
E(t)\le E(0).
\]
From \eqref{ap-poi-0} we also have,
\[
\frac{1}{2}\frac{\diff}{\diff t}\|u\|^2 + E - \frac{p}{p+1} |u|^{p+1}_{p+1} = 0,
\]
and if $E(0)<0$, then 
\[
\frac{1}{2}\frac{\diff}{\diff t}\|u\|^2 \ge \frac{p}{p+1} |u|^{p+1}_{p+1} \ge \frac{p}{p+1} \|u\|^{2\cdot\frac{p+1}{2}}.
\]
We now observe that, due to the complicated degenerate boundary conditions imposed on the solution in \eqref{ap-bndry}, we find following integration by parts and recalling \eqref{ap-xi2},  the condition \eqref{Wang-cond2}.
Indeed, consider
\begin{align*}
E(0) & = \langle -\partial_x(x^d \partial_x u_0),u_0 \rangle - \frac{1}{p+1}|u_0|^{p+1}_{p+1}  \\
& = \langle x^d \partial_x u_0,\partial_xu_0 \rangle - \frac{1}{p+1}|u_0|^{p+1}_{p+1}  \\
& = \xi^d(0)\|\partial_x u_0\|^2 - \frac{1}{p+1}|u_0|^{p+1}_{p+1}  \\
& = \langle x^d,(\partial_x u_0)^2 \rangle - \frac{1}{p+1}|u_0|^{p+1}_{p+1}.  \\
\end{align*}
Now following some of the calculations as in the proof of Theorem \ref{blow-up-result}, we readily find the blow-up time \eqref{time2} as claimed.
This finishes the proof.
\end{proof}

\section*{Acknowledgments}

The author would like to thank the referees for their generous comments when reading the manuscript.

\appendix
\section{}

Here we report the Mean Value Theorem for Integrals that is used throughout this paper.
The interested reader can also see \cite[Chapter 7]{SR}.

\begin{theorem}
\label{trick}
Let $T>0$ and $\Omega$ be a bounded domain (open and connected) in $\mathbb{R}^N$. 
Let $f,\varphi\in C([0,T];L^1(\Omega))$. Suppose $\varphi(t)$ is nonnegative a.e. on $\Omega$. 
Then there is $\xi(t)\in C([0,T];\Omega)$ in which, for all $t\in[0,T]$, 
\[
\int_\Omega f(x,t)\varphi(x,t) \diff x = f(\xi(t),t)\int_\Omega\varphi(x,t) \diff x,
\]
that is,
\[
\langle f(t),\varphi(t) \rangle_{L^2(\Omega)} = f(\xi(t),t) |\varphi(t)|_{L^1(\Omega)}.
\]
\end{theorem}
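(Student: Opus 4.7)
The plan is to adapt the classical Mean Value Theorem for Integrals to a bounded connected domain $\Omega\subset\mathbb{R}^N$, with the added task of producing a time-continuous selector $\xi(t)$. I tacitly assume that $f(\cdot,t)$ is continuous on $\Omega$, which is implicit in the statement since the right-hand side involves the pointwise value $f(\xi(t),t)$.

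First I would fix $t\in[0,T]$ and split into cases. If $\int_\Omega \varphi(x,t)\diff x = 0$, then $\varphi(\cdot,t)=0$ a.e.\ by nonnegativity, both sides of the identity vanish, and any $\xi(t)\in\Omega$ works. Otherwise, set
\[
R(t) := \frac{\int_\Omega f(x,t)\varphi(x,t)\diff x}{\int_\Omega \varphi(x,t)\diff x},
\]
which is continuous in $t$ by the hypothesis $f,\varphi\in C([0,T];L^1(\Omega))$; the identity to establish then reduces to $f(\xi(t),t)=R(t)$.

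For the pointwise statement at fixed $t$, nonnegativity of $\varphi(\cdot,t)$ gives the standard bracketing
\[
\inf_{x\in\Omega} f(x,t) \;\leq\; R(t) \;\leq\; \sup_{x\in\Omega} f(x,t).
\]
Since $\Omega$ is open and connected in $\mathbb{R}^N$ it is path-connected, so $f(\Omega,t)$ is a connected subset of $\mathbb{R}$, i.e.\ an interval. Applying the intermediate value theorem along any continuous path in $\Omega$ joining a point where $f(\cdot,t)$ lies at or below $R(t)$ to one where it lies at or above, one obtains $\xi(t)\in\Omega$ with $f(\xi(t),t)=R(t)$. This is essentially the ``straightforward generalization'' alluded to in the proof of Proposition \ref{MV-lemma}.

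The main obstacle --- indeed, the only genuinely non-routine point --- is continuity of $t\mapsto\xi(t)$, because the selection from the level set $\{x\in\Omega : f(x,t)=R(t)\}$ is in general highly non-unique. I would handle this either by invoking a continuous selection principle applied to the set-valued map $t\mapsto\{x\in\Omega : f(x,t)=R(t)\}$ (whose values are nonempty and closed, and which is appropriately semicontinuous thanks to joint continuity of $f$ and continuity of $R$), or, more concretely, by fixing a continuous path $\gamma:[0,1]\to\Omega$ along which $f(\gamma(\cdot),0)$ sweeps through an interval containing $R(0)$ and then tracking a continuous root $s(t)$ of $f(\gamma(s),t)=R(t)$ by an implicit-function/continuity argument, locally rechoosing $\gamma$ as $t$ evolves and the root threatens to exit $[0,1]$. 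Either route produces the desired $\xi\in C([0,T];\Omega)$ and completes the proof.
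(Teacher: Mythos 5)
Your fixed-$t$ argument is essentially the paper's: bracket the ratio $R(t)$ between the extremes of $f(\cdot,t)$ using nonnegativity of $\varphi(\cdot,t)$, then invoke the intermediate value theorem on the connected set $\Omega$ to produce $\xi(t)$. (The paper routes this through approximation by $C_c(\Omega)$ functions followed by a density passage to $L^1$, which is itself fragile since the pointwise value $f(\xi(t'),t')$ is not stable under $L^1$ limits; your tacit standing assumption that $f(\cdot,t)$ is continuous on $\Omega$ is the honest reading of what the statement needs and of how it is used in the body of the paper.)

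The genuine gap is exactly where you flag it: continuity of $t\mapsto\xi(t)$. You carry out neither of your two proposed repairs, and neither works off the shelf. Michael's selection theorem requires convex values, and the level sets $\{x\in\Omega : f(x,t)=R(t)\}$ are in general neither convex nor connected; upper semicontinuity of this set-valued map with compact values does not yield a continuous selection, because the level set can jump from one component to a disjoint one as $t$ crosses a critical value. The path-tracking alternative has the same defect: a root of $f(\gamma(s),t)=R(t)$ along a fixed path can be a tangential crossing that disappears as $t$ varies, forcing a jump of $s(t)$, and ``locally rechoosing $\gamma$'' is precisely the selection problem all over again. For comparison, the paper's own treatment of this step computes $\lim_{n} f(\xi(t_n),t_n)=f(\xi(t^*),t^*)$ and concludes $\xi\in C([0,T];\Omega)$; but that only establishes continuity of the scalar map $t\mapsto f(\xi(t),t)=R(t)$, not of $\xi$ itself, since $f(\cdot,t)$ need not be injective. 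So you have correctly isolated the weakest link of the theorem, but your proposal, like the paper's proof, does not close it. Since every application in the paper uses only the quantity $f(\xi(t),t)=R(t)$, whose continuity in $t$ is immediate, the defensible fix is to restate the conclusion in terms of $R(t)$ and drop the claim that $\xi$ itself can be chosen continuous.
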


\begin{proof}
Denote by $C_c(\Omega)$ the continuous functions on $\Omega$ that are compactly supported in $\Omega$ (recall that such functions are dense in $L^p(\Omega)$, $1\leq p<\infty$).
Fix $t'\in[0,T]$ and let $\tilde f(t'),\tilde\varphi(t')\in C_c(\Omega)$, where $\tilde\varphi(t')$ is nonnegative on $\Omega$.
By the Extreme Value Theorem for continuous functions, there exist $x_m$ and $x_M$ in $\Omega$ so that 
\[
m(t'):=\min_{x\in\Omega}\left\{\tilde f(x,t')\right\} = \tilde f(x_m,t')
\]
and 
\[
M(t'):=\max_{x\in\Omega}\left\{\tilde f(x,t')\right\} = \tilde f(x_M,t').
\]
Hence, there holds,
\[
m(t') \int_\Omega \tilde\varphi(x,t') \diff x \leq \int_\Omega \tilde f(x,t')\tilde\varphi(x,t') \diff x \leq M(t') \int_\Omega \tilde\varphi(x,t') \diff x.
\]
Since $\tilde\varphi(t')$ is nonnegative, $|\tilde\varphi(t')|_{L^1(\Omega)}=\int_\Omega \tilde\varphi(x,t') \diff x$. In the case when $\tilde\varphi(t')=0$, we obtain equality and identity. So in the case when $\tilde\varphi(t')$ is nonzero, we obtain 
\[
m(t') \leq \frac{1}{|\tilde\varphi(t')|_{L^1(\Omega)}}\int_\Omega \tilde f(x,t')\tilde\varphi(x,t') \diff x \leq M(t').
\]
Since $\tilde f(x,t')$ is continuous and $\tilde f(x,t')\in[m(t'),M(t')]$, then the Intermediate Value Theorem provides an $\xi(t')\in\Omega$ in which 
\[
\tilde f(\xi(t'),t') = \frac{1}{|\tilde\varphi(t')|_{L^1(\Omega)}}\int_\Omega \tilde f(x,t')\tilde\varphi(x,t') \diff x.
\]
Thus, 
\[
\int_\Omega \tilde f(x,t')\tilde\varphi(x,t') \diff x = \tilde f(\xi(t'),t') \int_\Omega \tilde\varphi(x,t') \diff x.
\]
By the density of $C_c(\Omega)\subset L^1(\Omega)$, we also have for $f(t'),\varphi(t')\in L^1(\Omega)$, 
\[
\int_\Omega f(x,t')\varphi(x,t') \diff x = f(\xi(t'),t') \int_\Omega \varphi(x,t') \diff x,
\] 
or 
\begin{equation}
\label{mvt-1}
\langle f(t')\varphi(t') \rangle_{L^2(\Omega)} = f(\xi(t'),t') |\varphi(t')|_{L^1(\Omega)}.
\end{equation}
So far we have shown that, for each fixed $t'\in[0,T]$, there exists $\xi(t') \in \Omega$ in which (\ref{mvt-1}) holds. 
It remains to show that the map $[0,T]\ni t\mapsto \xi(t)\in\Omega$ is continuous.
Let $t^*\in[0,T]$ and $(t_n)_{n\in\mathbb{N}_{>0}}$ be such that $t_n\rightarrow t^*$ as $n\rightarrow\infty$.
Then with the continuity of $f$ and $\varphi$ on $[0,T]$,
\[\begin{aligned}
\lim_{n\rightarrow \infty} f(\xi(t_n),t_n) & = \lim_{n\rightarrow \infty} \left( \frac{1}{|\varphi(t_n)|_{L^1(\Omega)}} \langle f(t_n),\varphi(t_n) \rangle_{L^2(\Omega)} \right) \\ 
& = \frac{1}{|\varphi(t^*)|_{L^1(\Omega)}} \langle f(t^*),\varphi(t^*) \rangle_{L^2(\Omega)} = f(\xi(t^*),t^*). 
\end{aligned}\]
Therefore, $\xi\in C([0,T];\Omega)$. This finishes the proof.
\end{proof}


\begin{thebibliography}{10}

\bibitem{Babin&Vishik92}
A.~V. Babin and M.~I. Vishik, \emph{Attractors of evolution equations},
  North-Holland, Amsterdam, 1992.

\bibitem{Ball00}
J.~M. Ball, \emph{Continuity properties and global attractors of generalized
  semiflows and the {N}avier-{S}tokes equations}, Nonlinear Science \textbf{7}
  (1997), no.~5, 475--502, Corrected version appears in the book {\em
  Mechanics: From Theory to Computation}, Springer-Verlag, New York 447--474,
  2000.

\bibitem{Ball04}
\bysame, \emph{Global attractors for damped semilinear wave equations},
  Discrete Contin. Dyn. Syst. \textbf{10} (2004), no.~2, 31--52.

\bibitem{Chepyzhov&Vishik02}
Vladimir~V. Chepyzhov and Mark~I. Vishik, \emph{Attractors for equations of
  mathematical physics}, Colloquium Publications - Volume 49, American
  Mathematical Society, Providence, 2002.

\bibitem{Fujita-66}
H.~Fujita, \emph{On the blowing up of solutions to the cauchy problem for $u_t
  = \delta u + u^{1+\alpha}$}, J. Fac. Sci. Univ. Tokyo, Sect. IA, Math.
  \textbf{13} (1966), 109--124.

\bibitem{Hunter&Nachtergaele01}
John~K. Hunter and Bruno Nachtergaele, \emph{Applied analysis}, World
  Scientific Publishing Company, Hakensack, 2001.

\bibitem{Ladyzhenskaya91}
Olga Ladyzhenskaya, \emph{Attractors for semigroups and evolution equations},
  Cambridge University Press, Cambridge, 1991.

\bibitem{Melnik&Valero98}
V.~S. Melnik and J.~Valero, \emph{On attractors of multi-valued semi-flows and
  differential inclusions}, Set-Valued Anal. \textbf{6} (1998), no.~1, 83--111.

\bibitem{Robinson01}
James~C. Robinson, \emph{Infinite-dimensional dynamical systems}, Cambridge
  Texts in Applied Mathematics, Cambridge University Press, Cambridge, 2001.

\bibitem{SR}
P. K. Sahoo and T. Riedel, \emph{Mean value theorems and functional equations}, World Scientific Publishing Co., Inc., River Edge, NJ, 1998.

\bibitem{Segatti06}
Antonio Segatti, \emph{On the hyperbolic relaxation of the {C}ahn-{H}illiard
  equation in 3-d: Approximation and long time behaviour}, Math. Models Methods
  Appl. Sci. \textbf{to appear} (2006).

\bibitem{Temam88}
Roger Temam, \emph{Infinite-dimensional dynamical systems in mechanics and
  physics}, Applied Mathematical Sciences - Volume 68, Springer-Verlag, New
  York, 1988.

\bibitem{Wang13}
Chunpeng Wang, \emph{Asymptotic behavior of solutions to a class of semilinear
  parabolic equations with boundary degeneracy}, Proc. Amer. Math. Soc.
  \textbf{141} (2013), no.~9, 3125--3140.

\bibitem{Zelik04}
Sergey Zelik, \emph{Asymptotic regularity of solutions of singularly perturbed
  damped wave equations with supercritical nonlinearities}, Discrete Contin.
  Dyn. Syst. \textbf{11} (2004), no.~3, 351--392.

\bibitem{Zheng04}
Songmu Zheng, \emph{Nonlinear evolution equations}, Monographs and Surveys in
  Pure and Applied Mathematics - Volume 133, Chapman \& Hall/CRC, Boca Raton,
  2004.
\end{thebibliography}

\providecommand{\bysame}{\leavevmode\hbox to3em{\hrulefill}\thinspace}
\providecommand{\MR}{\relax\ifhmode\unskip\space\fi MR }
\providecommand{\MRhref}[2]{%
  \href{http://www.ams.org/mathscinet-getitem?mr=#1}{#2}
}
\providecommand{\href}[2]{#2}

\end{document}